\documentclass[10pt]{amsart}

\usepackage[utf8]{inputenc}
\usepackage[T2A]{fontenc}
\usepackage[english]{babel}
\usepackage{amssymb,amsmath}
\usepackage{mathtools}
\usepackage{amsthm}
\usepackage{hyperref}
\usepackage{tikz-cd}
\usepackage{geometry}
\usepackage[shortlabels]{enumitem}
\geometry{
  paper=a4paper,
  margin=80pt,
  includeheadfoot
}

\newtheorem{theorem}{Theorem}[section]
\newtheorem{proposition}[theorem]{Proposition}
\newtheorem{lemma}[theorem]{Lemma}
\newtheorem{corollary}[theorem]{Corollary}

\theoremstyle{definition}
\newtheorem{definition}[theorem]{Definition}
\newtheorem{remark}[theorem]{Remark}

\newcommand{\Z}{\mathbb{Z}}
\newcommand{\Q}{\mathbb{Q}}
\newcommand{\Qell}{\mathbb{Q}_{\ell}}
\newcommand{\V}{\mathbb{V}}

\newcommand{\Zhat}{\widehat{\mathbb{Z}}}

\newcommand{\Oc}{\mathcal{O}}

\DeclareMathOperator{\Gal}{Gal}
\DeclareMathOperator{\Spec}{Spec}
\DeclareMathOperator{\GL}{GL}
\DeclareMathOperator{\Sec}{Sec}
\DeclareMathOperator{\qSec}{qSec}
\DeclareMathOperator{\Sel}{Sel}

\title{On reductions of Selmer sections}
\author{Wojciech Porowski}
\address{Research Institute for Mathematical Sciences, Kyoto University, Kyoto 606-8502, Japan}
\email{porowski@kurims.kyoto-u.ac.jp}  
\subjclass[2010]{14H30, 14F35}

\begin{document}

\begin{abstract}
We consider reductions of Selmer sections of the \'etale homotopy sequence of a hyperbolic curve over a number field.
We show that the conjugacy class of a noncuspidal Selmer section is uniquely determined by its reduction on a set of density one.
Moreover, we show that a noncuspidal Selmer section reduces to cusps only on a set of density zero, at least after a finite field extension.
\end{abstract}

\maketitle

\section{Introduction}\label{s:intro}
Let $X$ be a hyperbolic curve over a field $K$, consider the homotopy short exact sequence
\begin{equation}\label{s1:ses:etale}
1\to \Delta_X\to \Pi_X \to G_K\to 1,
\end{equation}
here $G_K$ is the absolute Galois group of $K$, $\Pi_X$ is the \'etale fundamental group of $X$ and $\Delta_X$ is the geometric \'etale fundamental group of $X$ (see~\cite[Exp.~IX, Thm.~6.1]{sga1}).
In this article we are interested in conjugacy classes of sections of the sequence~\eqref{s1:ses:etale}.
By functoriality of the \'etale fundamental group every $K$-rational point of $X$ determines a conjugacy class of sections called geometric sections.
Write $\overline{X}$ for the compactification of $X$, then every $K$-rational point of $\overline{X}\setminus X$ (i.e., a cusp) determines a packet of cuspidal sections (see~\cite{stix2012cusp} or Section~\ref{s:loc_cusp}).
The Grothendieck section conjecture, a major open problem in anabelian geometry, predicts that when $K$ is a number field every section of the sequence~\eqref{s1:ses:etale} is either geometric or cuspidal; see~\cite{stix2012rational} for a comprehensive introduction.

Assume that $K$ is a number field, write $\V(K)$ for the set of nonarchimedean valuations of $K$.
For a valuation $v\in \V(K)$ write $K_v$ for the completion of $K$ at $v$, $\Oc_v$ for the valuation ring and $\kappa(v)$ for the residue field.
We say that a section of the sequence~\eqref{s1:ses:etale} is \emph{Selmer} if for every valuation $v\in \V(K)$ the restriction of $s$ to a decomposition group $G_v\subset G_K$ of $v$ is either geometric or cuspidal.
Thus, every Selmer section $s$ determines a collection of $K_v$-rational points of $\overline{X}$
\[
s_v\in \overline{X}(K_v),\; \textrm{for}\; v\in \V(K).
\]
After choosing a model $\overline{\mathcal{X}}$ of $\overline{X}$ over the ring of integers $\Oc_K$ of $K$ we may consider the reductions $\bar{s}_v$ of points $s_v$, i.e., the images of $s_v$ through the map
\[
\overline{X}(K_v) = \overline{\mathcal{X}}(\Oc_v) \to \overline{\mathcal{X}}(\kappa(v)).
\]
The collection of points $(\bar{s}_v)_v$ is called the reduction of the Selmer section $s$; note that modulo a finite subset it may be regarded as being independent of the chosen model. 

Our first result, which is a strengthening of~\cite[Thm 1.1]{porowski2024} in the Selmer case, says that the reduction of a noncuspidal Selmer section on a large set of valuations uniquely determines the conjugacy class of the section; see Corollary~\ref{s5:cor:noncusp_selmer_weak_cover} where we prove a slightly stronger result.
\begin{theorem}\label{s1:thm:thm_1}
Let $s$ and $t$ be two noncuspidal Selmer sections of the sequence~\eqref{s1:ses:etale}, suppose that $s$ and $t$ have the same reduction on a set of valuations of density one.
Then $s$ and $t$ are conjugate.
\end{theorem}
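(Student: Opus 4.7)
The plan is to show that $s$ and $t$ agree in every finite quotient of $\Pi_X$, up to compatible conjugation, and then to conclude via an inverse-limit argument. First, after a finite extension of $K$ (which does not affect the conjugacy statement), I invoke the second main result announced in the abstract --- noncuspidal Selmer sections reduce into the cusps on at most a density-zero set --- to find a density-one set $V\subset\V(K)$ on which $\bar s_v=\bar t_v\in\mathcal X(\kappa(v))$, i.e.\ the common reduction avoids the cusps.

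Next, I fix a finite Galois \'etale cover $\widetilde Y\to X$ with group $G$, and write $\pi\colon \Pi_X\twoheadrightarrow G$ for the induced quotient. After discarding the finitely many primes of bad reduction, the cover spreads out to a finite \'etale $\widetilde{\mathcal Y}\to\mathcal X|_U$ over a dense open $U\subset\Spec\Oc_K$. For every $v\in V\cap U$ the pullbacks $s_v^{*}\widetilde{\mathcal Y}$ and $t_v^{*}\widetilde{\mathcal Y}$ are finite \'etale $\Oc_v$-schemes with identical special fibre $\widetilde{\mathcal Y}_{\bar s_v}$, and hence are isomorphic (finite \'etale covers of $\Oc_v$ being determined by their special fibres). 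The Frobenius action on the two fibres therefore coincides, which translates to the statement that $\pi\circ s(\operatorname{Frob}_v)$ and $\pi\circ t(\operatorname{Frob}_v)$ are conjugate in $G$. Combining Chebotarev density with the additional rigidity furnished by the Selmer hypothesis --- each local restriction $s|_{G_v}$ being geometric and hence encoded by more than just its Frobenius conjugacy class --- should yield that the homomorphisms $\pi\circ s,\pi\circ t\colon G_K\to G$ are conjugate by a single element of $G$.

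Running this argument over all finite Galois quotients $Q$ of $\Pi_X$ produces, for each $Q$, a non-empty finite set of conjugators in $Q$. These assemble into a cofiltered inverse system, whose limit is non-empty and produces a single $\gamma\in\Pi_X$ with $s=\gamma t\gamma^{-1}$.

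The main technical obstacle I anticipate is the promotion in the second paragraph, from ``Frobenius-pointwise conjugate on a density-one set of places'' to ``conjugate as homomorphisms $G_K\to G$''. This implication is genuinely stronger for non-abelian $G$, and must crucially exploit the Selmer hypothesis: it is precisely the identification of $s|_{G_v}$ with the geometric section attached to $s_v$ (and the analogous identification for $t$) that provides the tower-compatible decomposition-group data needed for a refined Chebotarev argument to close the gap.
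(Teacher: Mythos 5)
Your reduction to the cusp-avoidance statement in the first paragraph matches the paper's first step (Corollary~\ref{s5:cor:noncusp_selmer} / Theorem~\ref{s5:thm:cusp_strict_reduction}), and the observation that equal reductions force the Frobenius images in any finite quotient $G$ of $\Pi_X$ to be conjugate is sound (finite \'etale covers of $\Spec\Oc_v$ are indeed determined by their special fibres). But the argument has a genuine gap exactly where you flag it, and in fact two distinct difficulties are being conflated there.

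First, ``$\pi\circ s(\operatorname{Frob}_v)$ and $\pi\circ t(\operatorname{Frob}_v)$ conjugate for $v$ in a density-one set'' does \emph{not} yield that $\pi\circ s$ and $\pi\circ t$ are conjugate by a single element of $G$ when $G$ is non-abelian: element-conjugate homomorphisms need not be globally conjugate (this is the Gassmann phenomenon), and Chebotarev only converts the density statement into element-conjugacy everywhere. Appealing to ``the additional rigidity furnished by the Selmer hypothesis'' is not an argument; note that even the much stronger input ``the local sections $s_v$ and $t_v$ are conjugate for all $v$ in a density-one set $\Rightarrow$ $s$ and $t$ are conjugate'' is the main theorem of~\cite{porowski2024} (Thm~1.1/Thm~4.6 there), proved by a descent through neighbourhoods combined with an abelian covering argument, not by Chebotarev plus an inverse limit of conjugators. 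Second, your hypothesis is weaker still: equal reductions $\bar s_v=\bar t_v$ do not give $s_v=t_v$ in $X(K_v)$ (distinct $K_v$-points routinely share a reduction), so you never actually obtain local conjugacy of the sections, only Frobenius data in finite quotients. The paper bridges this second gap globally, not place by place: it passes to the maximal geometrically abelian quotient (Lemma~\ref{s3:lem:compact_ab}), where the difference of $s$ and $t$ is a class in $H^1(G_K,T(A))$ for $A$ the Jacobian; the Selmer/integrality condition makes the reduction map an isomorphism on $\mathcal A(\Oc_v)/N$ (Lemma~\ref{s3:lem:reduction_mod_N}), and a torsion argument (Theorem~\ref{s3:thm:weak_cover}) forces the class to vanish. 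Remark~\ref{s3:rem} exhibits two non-conjugate locally geometric sections of $\mathbb{G}_m$ with identical reductions almost everywhere, which shows that integrality must enter quantitatively and that no purely Frobenius-in-finite-quotients argument of the kind you sketch can succeed. As written, the proposal does not close either gap.
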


Our second result (proved in Theorem~\ref{s5:thm:cusp_strict_reduction}) concerns the intersection of the reduction of a section with the divisor of cusps.
For a Selmer section $s$ of the sequence~\eqref{s1:ses:etale} write $\Omega_s$ for the set of all valuations $v\in \V(K)$ such that the reduction $\bar{s}_v$ coincides with the reduction of a cusp.
As before, modulo a finite set the subset $\Omega_s\subset \V(K)$ is independent of the chosen model.
We prove that in the noncuspidal case the set $\Omega_s$ has density zero after a finite field extension; this is a variant of \cite[Thm B]{stix2015birational} where birational Selmer sections were considered.

\begin{theorem}\label{s1:thm:thm_2}
Let $s$ be a noncuspidal Selmer section of the sequence~\eqref{s1:ses:etale}.
Then there exists a finite field extension $L/K$ such that the preimage of the set $\Omega_s$ under the natural restriction map
\[
\V(L)\twoheadrightarrow \V(K)
\]
has density zero.
\end{theorem}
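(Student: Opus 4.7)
The plan is to reduce the density zero statement to a sum over cusps and, for each cusp, extract a Kummer-theoretic obstruction whose non-triviality forces the corresponding set of places to be density zero. After replacing $K$ by a finite extension $L$ over which every cusp of $\overline{X}$ becomes rational, the preimage of $\Omega_s$ in $\V(L)$ decomposes as the finite union $\bigcup_c \Omega_{s,c}$ indexed by the cusps $c$ of $\overline{X}_L$, where $\Omega_{s,c}$ consists of those $v$ at which $\bar{s}_v$ agrees with the reduction of $c$. Since a finite union of density zero sets has density zero, it suffices to handle each $\Omega_{s,c}$ separately, so fix a cusp $c$ in what follows.

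Let $s_c \colon G_L \to \Pi_X$ be a representative of the cuspidal packet of $c$, after possibly a further finite extension of $L$. Since $s$ is globally noncuspidal, $s$ is not conjugate to $s_c$. I would compare the two sections modulo the cyclotomic inertia $I_c \cong \Zhat(1)$ at $c$, obtaining, after passing to pro-$\ell$ truncations, a Kummer obstruction class $\xi_c \in H^1(G_L, \mathbb{Z}_\ell(1))$ corresponding via Kummer theory to a virtual Kummer evaluation at $s$ of a uniformizer at $c$. By construction, the local component $(\xi_c)_v$ should encode the $v$-adic contact order of $s_v$ with $c$.

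For $v \in \Omega_{s,c}$ away from a controlled finite set of places, the local component $(\xi_c)_v$ has strictly positive $v$-adic valuation, reflecting the vanishing of the uniformizer at the reduction. A density argument parallel to \cite[Thm.~B]{stix2015birational} --- exploiting that a nonzero global Kummer class cannot have positive $v$-adic valuation outside a set of places of density zero, analogously to the fact that a nonzero element of $L^\times$ is a unit away from a finite set of places --- then forces $\Omega_{s,c}$ to have density zero, provided $\xi_c$ is nontrivial.

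The main obstacle is the construction and non-triviality of $\xi_c$. In the birational setting of \cite{stix2015birational} the analogous class is read off directly from the absolute Galois group of the function field, but in the Selmer case the obstruction must be reconstructed from restricted section data. I expect this to require a careful study of the cuspidal packet at $c$, possibly combined with Theorem~\ref{s1:thm:thm_1} to rule out pathological coincidences between $s$ and $s_c$ on density-positive sets of places after further extensions of $L$, and a further enlargement of $L$ trivializing enough $\ell$-torsion of the Jacobian of $\overline{X}$ to guarantee non-triviality of at least one pro-$\ell$ component of the obstruction.
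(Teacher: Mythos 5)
Your approach is genuinely different from the paper's, and it contains a gap that I do not think can be repaired within the framework you propose. The first problem is that the class $\xi_c$ is not well-defined as described: the difference cocycle between $s$ and a cuspidal section $s_c$ takes values in all of $\Delta_X$, not in the inertia $I_c$. It only collapses to a class in $H^1(G_L,\Zhat(1))$ after passing to the cuspidally central quotient \emph{and} knowing beforehand that $s$ and $s_c$ already agree in $\Pi_{X'}$ for $X'=X\cup\{c\}$ — this is exactly how the proof of Theorem~\ref{s2:thm:loc_cusp} proceeds, but there one has the hypothesis that $s$ is locally cuspidal at $c$ on a density one set. For a general noncuspidal Selmer section no such class exists. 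The robust substitute is the Kummer evaluation $s^*(f)\in H^1(G_L,\Zhat(1))\cong\widehat{L^\times}$ of a unit $f\in\Oc(X)^\times$ vanishing at $c$ (this is what is available in the birational setting of Stix), but this brings you to the second, more serious problem.

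The density claim on which your argument rests — that a nonzero global Kummer class has positive local valuation only on a set of density zero — is false for general classes in $\widehat{L^\times}$. Remark~\ref{s3:rem} of the paper constructs a nontrivial element of $\widehat{\Q^\times}$ whose localization at \emph{every} prime $p$ lies in $\Q_p^\times$ and has strictly positive valuation. The analogue of ``a nonzero element of $L^\times$ is a unit away from finitely many places'' holds only for classes that are Selmer in the sense of Definition~\ref{s3:def:selmer_class}, i.e.\ integral at almost all places; but establishing that the class attached to $s$ (say via the generalized Jacobian, cf.\ Remark~\ref{s5:cor:noncusp_selmer_weak_cover} ff.) is Selmer after a finite extension is essentially equivalent to the theorem you are proving. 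The paper's actual mechanism is entirely different: it builds a family of stable curves over $\overline{X}$ whose degeneration over every cusp has non-tree dual graph, so that cuspidal reduction of $s_v$ is detected by a root-of-unity eigenvalue of Frobenius in the associated $\ell$-adic representation; Proposition~\ref{s4:prop:eigenvalues_on_strict} (a Chebotariev argument over a strongly positive density set, which is also what forces the finite extension $L$ into the statement) then propagates this to all places, and Theorem~\ref{s5:thm:cusp_strict_reduction} concludes via the weak-cover results of Section~\ref{s:weak_covers}. Your sketch flags the nontriviality of $\xi_c$ as the main obstacle, but the obstacle is really the counterexample of Remark~\ref{s3:rem}, and neither Theorem~\ref{s1:thm:thm_1} nor trivializing torsion of the Jacobian supplies a way around it.
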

Note that according to the section conjecture Theorem~\ref{s1:thm:thm_1} should hold when the set of valuations in its statement is only assumed to be infinite; similarly in Theorem~\ref{s1:thm:thm_2} the set $\Omega_s$ is conjectured to be finite.
However, bridging the gap between sets of density zero and infinite sets will require some new ideas beyond our methods as our approach is based on Chebotariev's density theorem thus seems to require some sort of positivity assumption.

We now sketch the structure of the present work, note that this article may be regarded as a natural continuation of the previous work~\cite{porowski2024} by the author. 
In Section~\ref{s:loc_cusp} we show that a section of the sequence~\eqref{s1:ses:etale} which is cuspidal on a set of density one must be globally cuspidal, this uses the main result of~\cite{porowski2024}.
In Section~\ref{s:weak_covers} we introduce weak covers of locally geometric sections and prove a variant of the finite covering property (cf.~\cite{porowski2024}) for them.
Section~\ref{s:ell-adic} contains a proof of a technical lemma on $\ell$-adic representations which is used later.
In Section~\ref{s:noncuspidal_selmer} we prove our main results by constructing a suitable $\ell$-adic representation of the \'etale fundamental group of $X$ and applying results obtained in previous sections.
Finally, in Section~\ref{s:fibres} we consider morphisms of curves and define fibres of morphisms over sections; we show that these fibres are finite with the expected bound on their size.

\section{Locally cuspidal sections}\label{s:loc_cusp}
Let $X$ be a smooth, geometrically connected variety over a field $K$, write $\overline{K}$ for an algebraic closure of $K$, $G_K= \Gal(\overline{K}/K)$ for the absolute fundamental group of $K$, $\Pi_X$ for the \'etale fundamental group $\pi^{et}_1(X)$ of $X$ and $\Delta_X$ for the geometric fundamental group $\pi^{et}_1(X_{\overline{K}})$, thus we have a short exact sequence of profinite groups
\begin{equation}\label{s2:seq:etale}
1\to \Delta_X\to \Pi_X\to G_K\to 1.
\end{equation}
When $X$ is a curve we say that $X$ is $\pi_1$\emph{-nontrivial} if the geometric fundamental group $\Delta_X$ is nontrivial.
Equivalently, $X$ is $\pi_1$-nontrivial if and only if $X$ is isomorphic neither to $\mathbb{P}^1$ nor to $\mathbb{A}^1$.
Denote by 
\[
\Sec(\Pi_X,G_K)
\]
the set of conjugacy classes of sections of the surjection $\Pi_X\twoheadrightarrow G_K$, we have a natural Kummer map 
\begin{equation}
X(K)\to \Sec(\Pi_X, G_K).
\end{equation}
A section of the sequence~\eqref{s2:seq:etale} whose conjugacy class lies in the image of the Kummer map is called \emph{geometric}.
When $X$ is either a $\pi_1$-nontrivial curve or a semiabelian variety and $K$ is either a number field or a $p$-adic local field then the Kummer map is injective.
In this situation a geometric section $s$ corresponds to a unique $K$-rational point $x\in X(K)$, we will then say that $s$ is geometric \emph{with support at}~$x$.

Assume now that $X$ is a curve.
Write $\overline{X}$ for the smooth compactification of $X$, it is a smooth proper curve over $K$.
We have a surjection $\Pi_{X}\twoheadrightarrow \Pi_{\overline{X}}$ induced by the open immersion $X\hookrightarrow \overline{X}$ and denote by $s^{cp}$ the composition of $s$ with this surjection.
Thus $s^{cp}$ is a section of the short exact sequence
\begin{equation}\label{s2:seq:etale_cp}
1\to\Delta_{\overline{X}}\to \Pi_{\overline{X}}\to G_K\to 1.
\end{equation}

We need to recall the notion of a cuspidal section; for more details see~\cite{stix2012cusp}.
Assume that $X$ is a hyperbolic curve and that $K$ has characteristic zero, denote the divisor of cusps by
\[
X^{csp} = \overline{X}\setminus X.
\]
Let $x\in X^{csp}(K)$ be a $K$-rational cusp and $D_x\subset \Pi_X$ be a decomposition group of $x$.
Write $\Oc_x$ for the local ring of $\overline{X}$ at $x$, $K_x$ for the field of fractions of $\Oc_x$ and $\widehat{K}_x$ for the completion of $K_x$.
Then the decomposition group $D_x$ might be identified with the absolute Galois group of $\widehat{K}_x$.
Denote by 
\[
I_x = D_x \cap \Delta_X
\]
the corresponding inertia group of the cusp $x$, we have an isomorphism $I_x\cong \Zhat(1)$ and a split short exact sequence of profinite groups
\begin{equation}\label{s2:seq:cusp}
1\to I_x\to D_x \to G_K\to 1.
\end{equation}
A section $s$ of the sequence~\eqref{s2:seq:etale} which comes from a section of the sequence~\eqref{s2:seq:cusp} for some $K$-rational cusp $x$ is called a \emph{cuspidal} section; the cusp $x$ is then uniquely determined by $s$ and we will say that $s$ is cuspidal \emph{with support at}~$x$.
The set of conjugacy classes of cuspidal sections at $x$ is a torsor under the group 
\[
H^1(G_K,I_x)\cong H^1(G_K,\Zhat(1))\cong \varprojlim_n K^\times/(K^\times)^n = \colon\widehat{K^\times}.
\]
Finally, let $X'$ be a smooth curve such that $X\subset X'\subset \overline{X}$ and $X = X'\setminus\{x\}$. We have a surjection $\Pi_{X}\twoheadrightarrow \Pi_{X'}$ and the image of a section $s$ of the sequence~\eqref{s2:seq:etale} determines a section $s'$ of the sequence
\[
1\to \Delta_{X'}\to \Pi_{X'}\to G_K\to 1.
\] 
When $s$ is cuspidal with support at $x$ then the section $s'$ is geometric with support at $x$.  

From now on we assume that $K$ is a number field, denote by $\V(K)$ the set of nonarchimedean valuations of $K$.
For a valuation $v\in \V(K)$ write $K_v$ for the completion of $K$ at $v$, $\Oc_v$ for the ring of integers of $K_v$ and $G_v\subset G_K$ for a decomposition group of $v$ (determined up to conjugacy).
Denoting 
\[
X_v = X\times_K K_v
\]
we also have a short exact sequence 
\begin{equation}\label{s2:seq:etale_v}
1\to \Delta_{X_v} \to \Pi_{X_v}\to G_v\to 1,
\end{equation}
which may be identified with the restriction of the sequence~\eqref{s2:seq:etale} to a decomposition group $G_v$.
Given a section $s$ of the sequence~\eqref{s2:seq:etale} we denote by $s_v$ its restriction to the sequence~\eqref{s2:seq:etale_v}; sections of the sequence~\eqref{s2:seq:etale_v} will be referred to as \emph{local} sections. 

\begin{definition}
Let $s$ be a section of the sequence~\eqref{s2:seq:etale} and $v\in \V(K)$ be a valuation.
We say that the section $s$ is \textit{cuspidal at} $v$ if the local section $s_v$ is cuspidal.
For a subset $\Omega\subset \V(K)$ we say that $s$ is \textit{cuspidal on} $\Omega$ if for every $v\in \Omega$ the section $s$ is cuspidal at $v$.
\end{definition}

Our first result shows that local cuspidality almost everywhere is equivalent to global cuspidality; this will be strengthened in later sections (see Corollary~\ref{s3:cor:cusp_red} and Theorem~\ref{s5:thm:cusp_strict_reduction}).
\begin{theorem}\label{s2:thm:loc_cusp}
Let $X$ be a hyperbolic curve over a number field $K$ and $\Omega\subset \V(K)$ be a subset of density one. Let $s$ be a section of the sequence~\eqref{s2:seq:etale} and suppose that the section $s$ is cuspidal on $\Omega$.
Then $s$ is a cuspidal section.
\end{theorem}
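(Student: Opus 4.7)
My approach is to analyze the compactified section $s^{cp}$ of~\eqref{s2:seq:etale_cp} and reduce the statement to an application of the main density theorem of~\cite{porowski2024}. First, I would pass to a finite Galois extension $L/K$ over which every cusp of $\overline{X}$ becomes rational, so $X^{csp}(K)=\{c_1,\ldots,c_n\}$. This base change is harmless: the preimage of $\Omega$ in $\V(L)$ is still of density one, the cuspidality hypothesis on $s$ is preserved upon restriction to $G_L$, and conversely, if $s|_{G_L}$ is cuspidal at some $L$-rational cusp $x$, then $\Gal(L/K)$-equivariance together with uniqueness of the cuspidal support forces $x$ to be $K$-rational and (by a standard descent argument using splitness of the decomposition sequence at a $K$-rational cusp) $s$ to be cuspidal at $x$ over $K$.

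Next, for each $v\in\Omega$ the cuspidality of $s_v$ at a now $K$-rational cusp $c(v)$ makes $s^{cp}_v$ the geometric local section at $c(v)$, so its reduction $\overline{s^{cp}_v}$ equals $\overline{c(v)}$. Since distinct global cusps specialize to distinct reductions outside a finite set of valuations, this gives a well-defined partition $\Omega=\Omega_1\sqcup\cdots\sqcup\Omega_n$ with $\Omega_i=c^{-1}(c_i)$; at least one piece---say $\Omega_1$---has positive upper density. I would then apply~\cite[Thm~1.1]{porowski2024} to compare $s^{cp}$ with the geometric section at $c_1$ in $\Pi_{\overline{X}}$, whose reductions coincide on $\Omega_1$; granted that agreement can be promoted to a density-one set (see the obstacle below), the theorem forces $s^{cp}$ to be geometric at $c:=c_1$. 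To lift this to cuspidality of $s$, I would conjugate so that $s^{cp}$ factors through the decomposition $D'_c\subset\Pi_{\overline{X}}$; then the image of $s$ lies in the preimage $D_c\cdot N\subset\Pi_X$, where $N=\ker(\Pi_X\twoheadrightarrow\Pi_{\overline{X}})$ is the normal closure of the cuspidal inertias. Comparing $s$ with a fixed cuspidal section $t$ at $c$ yields a suitable cohomology class that is killed on every $G_v$ with $v\in\Omega_1$ by local cuspidality of $s_v$ at $c$, and a Chebotarev-style local-to-global vanishing forces the class to be trivial, so $s$ is conjugate to $t$ and is therefore a cuspidal section.

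The principal obstacle I anticipate is ensuring that some single $\Omega_i$ has density one (not merely positive upper density), which is what licenses the application of~\cite[Thm~1.1]{porowski2024}: a priori the density of $\Omega$ may split among the $n$ cusps, leaving each $\Omega_i$ of density strictly less than one. I expect this to be overcome by a finer use of the reduction data---perhaps via an iterative argument that peels off cusps one at a time, or by exploiting a global invariant of $s^{cp}$ (such as an $\ell$-adic representation, in the spirit of the constructions in later sections of the paper) that forces concentration at a single cusp. A parallel density upgrade is also required in the final cohomological step, where the locus on which the obstruction class vanishes must be of density one for the Chebotarev argument to apply.
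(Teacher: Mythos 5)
There are two genuine gaps here, one of which you flag yourself. First, the density-splitting problem is real, but you are reaching for the wrong tool: you do not need any single $\Omega_i$ to have density one, because the correct input is not the local--global conjugacy theorem applied to one cusp at a time, but the finite-descent/covering statement (\cite[Thm 4.6]{porowski2024}, or equivalently \cite[Thm 3.11]{stoll2007finite}) applied to the finite subscheme $X^{csp}\subset\overline{X}$. Since for every $v\in\Omega$ the local point $s^{cp}_v$ lies in $X^{csp}(K_v)$, that theorem directly produces a $K$-rational cusp $x$ with $s^{cp}$ geometric at $x$, with no need to concentrate the density at a single cusp and no reduction argument at all. (One first passes to a neighbourhood of $s$ so that $X$ has positive genus, a step you omit but which is needed for injectivity of the Kummer maps.) A useful byproduct is that, by injectivity of the local Kummer map for $\overline{X}$ over $K_v$, every $s_v$ with $v\in\Omega$ is then cuspidal at $x$ itself, so your partition collapses to a single piece.

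Second, and more seriously, your final step does not work as stated. The obstruction to $s$ being cuspidal at $x$, once $s^{cp}$ is known to be geometric at $x$, ultimately lives in $H^1(G_K,I_x)$ with $I_x\cong\Zhat(1)$, i.e.\ in $\widehat{K^\times}$, and for this group vanishing of the localizations at a density-one (let alone positive-upper-density) set of places does not imply global vanishing; there is no Chebotarev-style local--global principle available here. The paper circumvents this entirely: it interposes the curve $X'$ with $X=X'\setminus\{x\}$ (showing $s'$ is geometric at $x$ by a second application of \cite[Thm 4.6]{porowski2024}) and the maximal cuspidally central quotient $\Delta^{cc}_X$, then exploits that the packet of cuspidal sections at $x$ is a \emph{torsor} under $H^1(G_K,I_x)\cong H^1(G_K,I)$ to replace the comparison cuspidal section $t$ by another member of the packet for which the obstruction class is zero by construction --- no vanishing theorem needed. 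Only after this normalization does it verify that $s$ and $t$ are locally conjugate at every $v\in\Omega$ (where the local difference class in $H^1(G_v,I_x)$ dies because it maps isomorphically to $H^1(G_v,I)$) and invoke \cite[Thm 1.1]{porowski2024} legitimately on the density-one set $\Omega$. Both halves of your argument would need to be restructured along these lines.
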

\begin{proof}
By passing to a neighbourhood of the section $s$ we may assume that $X$ has positive genus.
Consider the section $s^{cp}$ of the sequence~\eqref{s2:seq:etale_cp}.
Note that for every valuation $v\in \Omega$ the local section $s^{cp}_v$ is geometric, with support at a $K_v$-rational cusp of $X$.
Then, it follows from~\cite[Thm 4.6]{porowski2024} (or from~\cite[Thm 3.11]{stoll2007finite}) that there exists a $K$-rational cusp $x\in X^{csp}(K)$ such that $s^{cp}$ is geometric with support at $x$. 

Define a smooth curve $X'$ such that $X\subset X'\subset \overline{X}$ and $X = X'\setminus \{x\}$.
We have a commutative diagram with exact rows
\begin{equation}\label{s2:diag:partial_comp}
\begin{tikzcd}
1\arrow[r] & \Delta_X\arrow[r]\arrow[d, two heads] & \Pi_X\arrow[r]\arrow[d, two heads] & G_K\arrow[r]\arrow[d, equal] & 1\\
1\arrow[r] & \Delta_{X'}\arrow[r] & \Pi_{X'}\arrow[r] & G_K\arrow[r] & 1.
\end{tikzcd}
\end{equation}
Write $s'$ for the section of the second row of the diagram~\eqref{s2:diag:partial_comp} obtained by composing $s$ with the surjection $\Pi_{X}\twoheadrightarrow\Pi_{X'}$.
By the previous discussion and another application of~\cite[Thm 4.6]{porowski2024} we see that $s'$ is a geometric section with support at $x$. 

Let $t$ be a cuspidal section with support at $x$ of the first row of the diagram~\eqref{s2:diag:partial_comp}, write $t'$ for its composition with the surjection $\Pi_X\twoheadrightarrow\Pi_{X'}$.
Replacing $t$ by some of its conjugates we may assume that $t' = s'$. Let $D_x\subset \Pi_X$ be a decomposition group of the cusp $x$ such that the image of $t$ lies in $D_x$, we have a short exact sequence
\[
1\to I_x\to D_x\to G_K\to 1,
\]
where $I_x$ is the corresponding inertia group.
Consider the maximal cuspidally central subquotient $\Delta^{cc}_X$ of the surjection $\Delta_X \twoheadrightarrow \Delta_{X'}$, i.e., the maximal subquotient
\[
\Delta_X\twoheadrightarrow \Delta^{cc}_X \twoheadrightarrow \Delta_{X'}
\]
such that the image of the inertia group $I_x$ in $\Delta^{cc}_X$ is a central subgroup.
By the well-known description of the geometric \'etale fundamental group of a hyperbolic curve we have a short exact sequence
\begin{equation}\label{s2:seq:cc}
1\to I\to \Delta^{cc}_X\to \Delta_{X'}\to 1.
\end{equation}
Moreover $I\cong \Zhat(1)$ and the natural map $I_x\to I$ induced by the surjection $\Delta_X\twoheadrightarrow \Delta^{cc}_X$ is an isomorphism. 
Write $\Pi^{(cc)}_X$ for the quotient of $\Pi_X$ determined by the quotient $\Delta_X \twoheadrightarrow \Delta^{cc}_X$ hence we have a commutative diagram with exact rows
\begin{equation}\label{s2:diag:etale_to_cc}
\begin{tikzcd}
1\arrow[r] & \Delta_X\arrow[r]\arrow[d, two heads] & \Pi_X\arrow[r]\arrow[d, two heads] & G_K\arrow[r]\arrow[d, equal] & 1\\
1\arrow[r] & \Delta^{cc}_X\arrow[r] & \Pi^{(cc)}_X\arrow[r] & G_K\arrow[r] & 1.
\end{tikzcd}
\end{equation}
Write $s^{cc}$ and $t^{cc}$ for the composition of sections $s$ and $t$ with the surjection $\Pi_X\twoheadrightarrow \Pi^{(cc)}_X$, they are both sections of the lower row of the diagram~\eqref{s2:diag:etale_to_cc}.

Consider the action of $G_K$ on the sequence~\eqref{s2:seq:cc} via the section $t$.
Then the difference between sections $s^{cc}$ and $t^{cc}$ is a cocycle 
\[
G_K\ni \sigma \mapsto c_{\sigma}\in \Delta^{cc}_X.
\]
By construction sections $s'$ and $t'$ are equal thus we see that the cocycle $c_{\sigma}$ in fact takes values in the subgroup $I\subset \Delta^{cc}_X$. Hence $c$ determines a cohomology class
\[
c \in H^1(G_K,I).
\]
Now, recall that the set of cuspidal sections at $x$ is a torsor under the group $H^1(G_K,I_x)$ which is isomorphic to $H^1(G_K,I)$ via the isomorphism $I_x\to I$. Thus, replacing $t$ by an appropriate cuspidal section we may assume that the class $c$ is trivial hence sections $s^{cc}$ and $t^{cc}$ are conjugate.

We now claim that sections $s$ and $t$ are locally conjugate on $\Omega$, i.e., for every valuation $v\in \Omega$ the local sections $s_v$ and $t_v$ are conjugate.
Indeed, as $s_v$ is cuspidal with support at $x$, replacing $s_v$ by some of its conjugates we may assume that the image of $s_v$ is contained in $D_x$.
Then the difference between sections $s_v$ and $t_v$ is a cohomology class $c_v\in H^1(G_v,I_x)$. On the other hand, as sections $s^{cc}$ and $t^{cc}$ are conjugate, the class $c_v$ vanishes in $H^1(G_v,I)$ under the natural isomorphism map $I_x\to I$. Thus $c_v$ is trivial which proves the claim. 

Finally, it follows from~\cite[Thm 1.1]{porowski2024} that $s$ and $t$ are conjugate which finishes the proof since by construction $t$ is a cuspidal section.
\end{proof}

\section{Weak covers}\label{s:weak_covers}
This section contains a variant of a result obtained in~\cite[\S2]{porowski2024}. We keep the same notation as in the previous section. 

Let $A$ be a semiabelian variety over a number field $K$, write $T(A)$ for its Tate module. We have a commutative diagram 
\begin{equation}\label{s3:diag:kummer_for_abelian_variety}
\begin{tikzcd}
A(K)\arrow[r, hook] \arrow[d, hook] & H^1(G_K,T(A))\arrow[d, "loc_v"] \\
A(K_v)\arrow[r, hook] & H^1(G_v, T(A)),
\end{tikzcd}
\end{equation}
where $loc_v$ is the restriction map in group cohomology.
Let 
\[
c  \in H^1(G_K, T(A))
\]
be a cohomology class, we say that $c$ is \emph{geometric at} $v$ if the localized class $loc_v(c)$ comes from a point in $A(K_v)$ via the lower horizontal arrow in the diagram~\eqref{s3:diag:kummer_for_abelian_variety}.
Given a valuation $v\in \V(K)$ for which the class $c$ is geometric at $v$ we write $c_v$ for the unique $K_v$-rational point of $A$ mapping to $loc_v(c)$.

By spreading out, there exists an open subscheme $U\subset \Spec \Oc_K$ and a semiabelian scheme $\mathcal{A}\to U$ whose fibre over $K$ is isomorphic to $A$; we will say simply that $\mathcal{A}$ is a semiabelian model of $A$ and we fix one such model $\mathcal{A}$.
Let $v\in \V(K)$ be a valuation whose corresponding prime ideal lies in $U$, then we have an injection
\[
\mathcal{A}(\Oc_v)\hookrightarrow \mathcal{A}(K_v) = A(K_v).
\]
When $c$ is geometric at $v$ we say that the class $c$ in \emph{integral at} $v$ if the point $c_v$ arises from a (unique) $\Oc_v$-point of $\mathcal{A}$ via the above injection.  
Write $\kappa(v)$ for the residue field of $v$ and denote by $\bar{c}_v$ the image of $c_v$ along the reduction map 
\[
\mathcal{A}(\mathcal{O}_v) \to \mathcal{A}(\kappa(v)).
\]
The following definition we be convenient for our purposes.
\begin{definition}\label{s3:def:selmer_class}
Let $c$ be a cohomology class in $H^1(G_K,T(A))$. We say that $c$ is a \emph{Selmer} class if there exists a subset $\Omega\subset \V(K)$ of density one such that for every $v\in \Omega$ the class $c$ is integral at $v$.
\end{definition}
This definition is independent of the choice of a particular semiabelian model of $A$ as any two semiabelian models will be isomorphic over an open subscheme of $\Spec \Oc_K$.
It is easy to see that the collection of Selmer classes forms a subgroup of the group $H^1(G_K,T(A))$.
Moreover when $A$ is proper over $K$ (i.e., $A$ is an abelian variety) then our notion of a Selmer class is equivalent, modulo a set of density zero, to the usual definition of a Selmer class.


Let $c, c_1, \ldots, c_n$ be a collection of Selmer classes for some $n\ge 1$.
After choosing a semiabelian model $\mathcal{A}$ of $A$ we find a subset $\Omega\subset \V(K)$ of density one such that all classes $c,c_1\ldots, c_n$ are integral at every $v\in \Omega$. 
\begin{definition}\label{s2:def:weak_cover_coh}
We say that classes $(c_i)_i$ \textit{weakly cover} the class $c$ if there exists a subset 
\[
\Omega'\subset \Omega
\]
of density one such that for every $v\in \Omega'$ there exists an integer $1\le i\le n$ such that $\bar{c}_{i,v} = \bar{c}_v$.
\end{definition}
Similarly to the definition of a Selmer class the notion of a weak cover does not depend on the choice of a semiabelian model of $A$ and a subset of valuations $\Omega$.
With these definitions the main result of this section, which is a small modification of~\cite[Thm~2.9]{porowski2024}, may be stated as follows. 
\begin{theorem}\label{s3:thm:weak_cover}
Let $A$ be a semiabelian variety and 
\[
c,c_1,\ldots ,c_n\in H^1(G_K,T(A))
\]
be Selmer classes for some $n\ge 1$. Suppose that classes $(c_i)_i$ weakly cover the class $c$.
Then there exists $1\le i \le n$ such that $c_i = c$.
\end{theorem}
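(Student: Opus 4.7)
The approach is to subtract $c$ from each $c_i$, turning the weak cover hypothesis into a statement about a finite collection of Selmer classes whose reductions cover the identity, and then to run the same $\ell$-adic Chebotarev argument as in \cite[Thm 2.9]{porowski2024}, with minor adjustments for the semiabelian/Selmer setting.

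First I set $d_i := c_i - c\in H^1(G_K,T(A))$. Since Selmer classes form a subgroup of $H^1(G_K,T(A))$, I may shrink $\Omega'$ to a density-one subset on which $\mathcal{A}$ has good reduction and all of $c, c_i, d_i$ are integral; at such primes the reduction $\mathcal{A}(\Oc_v)\to \mathcal{A}(\kappa(v))$ is a group homomorphism, so the condition $\bar c_{i,v}=\bar c_v$ becomes $\bar d_{i,v}=0$. Applying finite pigeonhole to the density-one union
\[
\Omega'=\bigcup_{i=1}^n\{v\in\Omega':\bar d_{i,v}=0\}
\]
I obtain an index $i_0$ for which $\Omega_{i_0}:=\{v\in \Omega':\bar d_{i_0,v}=0\}$ has positive upper density. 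It thus suffices to prove the following single-class version: a Selmer class $d\in H^1(G_K,T(A))$ whose reduction $\bar d_v$ vanishes on a set of valuations of positive upper density must itself be zero.

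This single-class statement follows the $\ell$-adic Chebotarev template used in \cite[Thm 2.9]{porowski2024}. Fix a rational prime $\ell$ and an integer $n\ge 1$. At any $v$ of good reduction whose residue characteristic differs from $\ell$, the kernel of $\mathcal{A}(\Oc_v)\to \mathcal{A}(\kappa(v))$ is pro-$p$ and hence invisible modulo $\ell^n$, so $\bar d_v = 0$ forces the image of $d$ in $H^1(G_K, A[\ell^n])$ to localize trivially at $v$. Since the relevant set of $v$ in $\Omega_{i_0}$ still has positive density, Chebotarev applied to the finite Galois representation $G_K\to \Aut(A[\ell^n])$ forces $d\bmod \ell^n$ to be a coboundary; letting $n\to\infty$ yields $d=0$.

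The main obstacle is adapting the argument of \cite[Thm 2.9]{porowski2024} from its original hypotheses to the semiabelian, Selmer setting used here. The density-one to positive-density relaxation is painless because Chebotarev only requires positivity, and the Selmer hypothesis is absorbed in the initial shrinking of $\Omega'$. The genuine additional work is the semiabelian extension: one uses the canonical exact sequence realizing $A$ as an extension of an abelian variety $B$ by a torus, runs the Chebotarev step for $B$ as in the abelian case, and separately treats the toric contribution via Kummer theory for $\mathbb{G}_m$, for which detection of reduction triviality is classical.
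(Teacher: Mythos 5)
Your opening reduction (subtracting $c$, using that Selmer classes form a subgroup, and shrinking to a common density-one set of integrality) matches the paper, but the pigeonhole step that follows breaks the proof irreparably: the single-class statement you reduce to --- that a nonzero Selmer class $d$ cannot satisfy $\bar d_v=0$ on a set of positive upper density --- is false. Take $A=\mathbb{G}_m$ over $\mathbb{Q}$ and $d=2^{\alpha}\in\widehat{\mathbb{Q}^\times}=H^1(G_{\mathbb{Q}},\widehat{\mathbb{Z}}(1))$ with $\alpha=(\ell^{k_\ell})_\ell\in\prod_\ell\mathbb{Z}_\ell=\widehat{\mathbb{Z}}$. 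This is a nonzero Selmer class (its localization at every odd prime $q$ lies in $\mathbb{Z}_q^{\times}$), and $\bar d_q=1$ exactly when $\mathrm{ord}_q(2)$ divides $\alpha$; the complement of this set of primes is contained in $\bigcup_\ell\{q\equiv 1 \bmod \ell^{k_\ell+1}\}$, whose upper density can be made smaller than $1/2$ by letting $k_\ell$ grow, so $\bar d_q=1$ on a set of lower density at least $1/2$ even though $d\ne 0$. The theorem survives only because its hypothesis is \emph{density one}, and the paper exploits this in an essentially non-pigeonhole way: assuming all the differences $c_i-c$ are non-torsion, the Chebotariev argument of \cite[Thm 2.9]{porowski2024} produces a \emph{single} positive-density set $S_0$ on which \emph{every} $c_i-c$ localizes nontrivially modulo a suitable $N$ simultaneously, whence by Lemma~\ref{s3:lem:reduction_mod_N} (the precise form of your ``pro-$p$ kernel'' remark) every reduction $\bar c_{i,v}-\bar c_v$ is nonzero for $v\in S_0$; this contradicts the weak cover precisely because a positive-density set must meet a density-one set. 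Your pigeonhole discards exactly this simultaneity together with the density-one input.

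Independently of the reduction, the Chebotariev step inside your single-class argument is also wrong in direction: a class in $H^1(G_K,A[\ell^n])$ whose localization vanishes at a positive-density set of places (or even at all places) need not be a coboundary, and in the Kummer-theoretic setting the set of $v$ at which a fixed nonzero global class dies locally modulo $\ell^n$ typically has positive density anyway. The paper uses Chebotariev in the contrapositive: for non-torsion classes one finds $N$ and a positive-density set of $v$ \emph{witnessing} local nontriviality mod $N$; one never deduces global triviality from local triviality on a positive-density set. Finally, your sketch omits the torsion case: the Chebotariev step only yields that some $c_i-c$ is torsion, and a separate argument (\'etaleness of $\mathcal{A}[d]$ over $\Oc_v$, so that vanishing of the reduction lifts to vanishing of the local point) is still required to conclude that a torsion class whose reduction vanishes at infinitely many $v$ is zero.
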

The proof of this theorem follows the same pattern as the proof of~\cite[Thm~2.9]{porowski2024} with a few minor modifications. Before giving the proof, we need a technical lemma concerning reductions of points on a semiabelian variety.
\begin{lemma}\label{s3:lem:reduction_mod_N}
Let $v\in \V(K)$ be a valuation lying over a prime $p\in \Z$ such that the absolute ramification index of $K_v$ is smaller than $p-1$. Consider a short exact sequence of commutative group schemes over $\Oc_v$
\[
0\to \mathcal{T}\to \mathcal{A}\to \mathcal{B}\to 0,
\]
where $\mathcal{T} = (\mathbb{G}^t_m)_{\Oc_v}$ for some integer $t\ge 0$ and $\mathcal{B}$ is an abelian scheme over $\Oc_v$.
Let $N\ge 1$ be a natural number which is prime to $p$. Then the reduction map $\mathcal{A}(\Oc_v)\to \mathcal{A}(\kappa(v))$ induces an isomorphism
\[
\mathcal{A}(\Oc_v)/N\mathcal{A}(\Oc_v) \cong \mathcal{A}(\kappa(v))/N\mathcal{A}(\kappa(v)).
\]
\end{lemma}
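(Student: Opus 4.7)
The plan is to identify the kernel of the reduction map with a pro-$p$ group, on which multiplication by $N$ is necessarily bijective, and then conclude by applying the snake lemma to multiplication by $N$. First I would observe that $\mathcal{A}$ is smooth over $\Oc_v$: the projection $\mathcal{A}\to \mathcal{B}$ is a $\mathcal{T}$-torsor with $\mathcal{T}$ smooth, and $\mathcal{B}$ is smooth as an abelian scheme, so $\mathcal{A}$ is smooth as well. By Hensel's lemma the reduction $\mathcal{A}(\Oc_v)\to \mathcal{A}(\kappa(v))$ is surjective, and its kernel is naturally identified with $\hat{\mathcal{A}}(\mf_v)$, the $\mf_v$-valued points of the formal completion of $\mathcal{A}$ along the identity section. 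Set-theoretically $\hat{\mathcal{A}}(\mf_v)=\mf_v^d$ with $d=\dim\mathcal{A}$, equipped with a commutative formal group law with coefficients in $\Oc_v$.

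Next I would check that $\hat{\mathcal{A}}(\mf_v)$ is a pro-$p$ group via the standard filtration $\hat{\mathcal{A}}(\mf_v)\supset \hat{\mathcal{A}}(\mf_v^2)\supset\cdots$: modulo $\mf_v^{i+1}$ the formal group law reduces to ordinary addition on $\mf_v^i$, so the successive quotients are isomorphic to $(\mf_v^i/\mf_v^{i+1})^d\cong \kappa(v)^d$, which are finite $\mathbb{F}_p$-vector spaces. Hence each finite-step quotient $\hat{\mathcal{A}}(\mf_v)/\hat{\mathcal{A}}(\mf_v^i)$ is a finite $p$-group, $\hat{\mathcal{A}}(\mf_v)$ is their inverse limit, and multiplication by $N$ (coprime to $p$) acts bijectively. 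The bound $e<p-1$ on the absolute ramification index of $K_v$ is in fact stronger than what this step requires: it would additionally imply convergence of the formal logarithm and an $\Oc_v$-module isomorphism $\hat{\mathcal{A}}(\mf_v)\cong \mf_v^d$, but that refinement is not needed here.

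Finally, I would apply the snake lemma to the multiplication-by-$N$ endomorphism of the short exact sequence
\[
0\to \hat{\mathcal{A}}(\mf_v)\to \mathcal{A}(\Oc_v)\to \mathcal{A}(\kappa(v))\to 0.
\]
Since both the kernel and the cokernel of multiplication by $N$ on $\hat{\mathcal{A}}(\mf_v)$ vanish, the induced map $\mathcal{A}(\Oc_v)/N\mathcal{A}(\Oc_v)\to \mathcal{A}(\kappa(v))/N\mathcal{A}(\kappa(v))$ is an isomorphism, as desired. The one substantive step is the identification of the reduction kernel with the formal group $\hat{\mathcal{A}}(\mf_v)$, which is standard for smooth commutative group schemes over a complete DVR; the rest of the argument is purely formal.
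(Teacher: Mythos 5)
Your proof is correct, and it takes a genuinely different route from the paper's. The paper splits the computation along the given sequence $0\to\mathcal{T}\to\mathcal{A}\to\mathcal{B}\to 0$: using Hilbert's Theorem 90 it obtains short exact sequences of $\Oc_v$- and $\kappa(v)$-points, applies the snake lemma to the reduction maps (with multiplication by $N$ built into the rows), and then verifies the outer vertical arrows one by one --- the $N$-torsion comparison for $\mathcal{B}$ via \'etaleness of $\mathcal{B}[N]$, the torus part by inspection, and the abelian part by citing the $N$-divisibility of the kernel of reduction from the appendix of Katz --- before concluding with the five lemma. You instead treat $\mathcal{A}$ as a single smooth group scheme: you identify the kernel of reduction with $\hat{\mathcal{A}}(\mf_v)$, show via the congruence filtration that this is pro-$p$ and hence uniquely $N$-divisible, and apply the snake lemma to multiplication by $N$ on $0\to\hat{\mathcal{A}}(\mf_v)\to\mathcal{A}(\Oc_v)\to\mathcal{A}(\kappa(v))\to 0$ (where surjectivity of reduction comes from smoothness and Hensel's lemma). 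Your argument is more self-contained and uniform --- it handles the torus and abelian parts simultaneously and in effect reproves the divisibility fact the paper imports from Katz --- at the cost of invoking the formal-group description of the reduction kernel. Your side remark is also accurate: since $N$ is prime to $p$, only the pro-$p$ structure of the kernel is used, so the hypothesis $e<p-1$ is not actually needed for this lemma; in the paper it enters only through the reference to Katz, whose full strength (torsion-freeness and $p$-divisibility statements) is likewise not required here.
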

\begin{proof}
Since the torus $\mathcal{T}$ is split and $\Oc_v$ is a local ring we obtain from Hilbert's Theorem 90 a commutative diagram with exact rows
\[
\begin{tikzcd}
0\arrow[r] & \mathcal{T}(\Oc_v) \arrow[r]\arrow[d] & \mathcal{A}(\Oc_v)\arrow[r]\arrow[d] & \mathcal{B}(\Oc_v)\arrow[r]\arrow[d] & 0 \\
0\arrow[r] & \mathcal{T}(\kappa(v)) \arrow[r] & \mathcal{A}(\kappa(v))\arrow[r] & \mathcal{B}(\kappa(v))\arrow[r] & 0,
\end{tikzcd}
\]
here the vertical arrows are the reduction maps.
From the snake lemma we infer a commutative diagram with exact rows
\[
\begin{tikzcd}[cramped, sep=small]
\mathcal{B}(\Oc_v)[N] \arrow[r]\arrow[d] & \mathcal{T}(\Oc_v)/N\mathcal{T}(\Oc_v) \arrow[r]\arrow[d] & \mathcal{A}(\Oc_v)/N\mathcal{A}(\Oc_v) \arrow[r]\arrow[d] & \mathcal{B}(\Oc_v)/N\mathcal{B}(\Oc_v) \arrow[r]\arrow[d] & 1 \\
\mathcal{B}(\kappa(v))[N] \arrow[r] & \mathcal{T}(\kappa(v))/N\mathcal{T}(\kappa(v)) \arrow[r] & \mathcal{A}(\kappa(v))/N\mathcal{A}(\kappa(v)) \arrow[r] & \mathcal{B}(\kappa(v))/N\mathcal{B}(\kappa(v)) \arrow[r] & 1. 
\end{tikzcd}
\]
In the above diagram, the first vertical arrow is an isomorphism as the $N$-torsion group scheme $\mathcal{B}[N]$ is \'etale over $\Oc_v$.
By inspection, the second vertical arrow is also an isomorphism.
Finally, thanks to our assumptions the kernel of the reduction map
\[
\mathcal{B}(\Oc_v)\to \mathcal{B}(\kappa(v))
\]
is $N$-divisible (see~\cite[Appendix]{katz_1981}), hence the fourth vertical arrow in the above diagram is an isomorphism as well.
Thus it follows from the five lemma that the third vertical arrow is also an isomorphism.
\end{proof}

\begin{proof}[Proof of Theorem~\ref{s3:thm:weak_cover} (cf. the proof of Theorem 2.9 in~\cite{porowski2024})]
Considering the differences $c_i-c$ we may assume that $c=0$, our goal is to show that $c_i= 0$ for some $1\le i \le n$. Note that our assumptions are stable with respect to finite field extensions $L$ of $K$ and that the restriction maps
\[
H^1(G_K,T(A)) \hookrightarrow H^1(G_L,T(A))
\]
are injective.
Thus we are free to replace the base field $K$ by a finite field extension, in particular we will assume that the semiabelian variety $A$ is split.
Moreover, by discarding some classes if necessary we may also assume that for every class $c_i$ the set of valuations $v\in \V(K)$ such that $\bar{c}_{i,v} = 0$ has positive upper density.

Fix some semiabelian model $\mathcal{A}$ of $A$ over an open subscheme $U\subset \Spec \Oc_K$. By shrinking $U$ we may assume that there is a short exact sequence of commutative group schemes over $U$
\[
0\to \mathcal{T}\to \mathcal{A}\to \mathcal{B}\to 0,
\]
where $\mathcal{T} = (\mathbb{G}^t_m)_U$ for some integer $t\ge 0$ and $\mathcal{B}$ is an abelian scheme over $U$.
Write 
\[
\V(U)\subset \V(K)
\]
for the subset of all valuations $v\in \V(K)$ such that the prime ideal corresponding to $v$ lies in $U$.
By shrinking $U$ further we may assume that every valuation $v\in \V(U)$ has absolute ramification degree smaller than $p_v -1$, where $p_v\in \Z$ is the residue characteristic of $v$.
Let 
\[
\Omega\subset \V(U)
\]
be a set of valuations with density one such that all classes $(c_i)_i$ are integral at every $v\in \Omega$ (with respect to the chosen model $\mathcal{A}\to U$).
Denote $M = T(A)$ and for any integer $N\ge 1$ write $M_N= M/NM$. 

First we claim that some class $c_i$ is torsion.
Suppose otherwise, then by the same argument as in the proof of~\cite[Thm 2.9]{porowski2024} there exists an integer $N\ge 1$ and a set of valuations $S_0\subset \Omega$ of positive upper density such that for every $v\in S_0$ and every $1\le i \le n$ the image of $c_i$ through the map
\[
H^1(G_K, M) \to H^1(G_K,M_N)
\] 
is nontrivial.
After removing finitely many valuations from $S_0$ we may assume that all valuations in $S_0$ are prime to $N$. 
Let $v\in S_0$ be a valuation, consider the following commutative diagram 
\[
\begin{tikzcd}
H^1(G_K,M)\arrow[r, "loc_v"] & H^1(G_v,M)\arrow[r] & H^1(G_v,M_N)\\
& A(K_v)\arrow[u, hook]\arrow[r, two heads] & A(K_v)/NA(K_v)\arrow[u, hook].
\end{tikzcd}
\]
Since classes $(c_i)_i$ are geometric at $v$ it follows from the definition of $S_0$ that the images of $c_{i,v}$ in
\[
A(K_v)/NA(K_v)
\]
are nontrivial.
On the other hand we have a sequence of maps with the arrow on the right being an isomorphism by Lemma~\ref{s3:lem:reduction_mod_N}
\[
\begin{tikzcd}
A(K_v)/NA(K_v) & \mathcal{A}(\Oc_v)/N\mathcal{A}(\Oc_v)\arrow[l] \arrow[r, "\simeq"] & \mathcal{A}(\kappa(v))/N\mathcal{A}(\kappa(v)).
\end{tikzcd}
\]
Therefore, as classes $(c_i)_i$ are in fact integral at $v$, it follows that their images in
\[
\mathcal{A}(\kappa(v))/N\mathcal{A}(\kappa(v))
\]
are also nontrivial.
In particular for every $v\in S_0$ and every $1\le i\le n$ the reductions
\[
\bar{c}_{i,v}\in \mathcal{A}(\kappa(v))
\]
are nontrivial. Since $S_0$ has positive upper density this is a contradiction with the assumption that classes $(c_i)_i$ weakly cover the zero class.
Hence some class $c_i$ must be torsion.

To finish the proof it will be enough to show that a torsion class $c$ which is integral at every $v\in \Omega$ and such that $\bar{c}_v = 0$ for infinitely many $v\in \Omega$ must be trivial.
Indeed, if $dc = 0$ for some $d \ge 1$ then pick a valuation $v\in \Omega$ such that $d$ is prime to the characteristic of $\kappa(v)$ and $\bar{c}_v = 0$.
Since the $d$-torsion group scheme $\mathcal{A}[d]$ is \'etale over $\Oc_v$ we see that the reduction map is an injection on $d$-torsion points
\[
\mathcal{A}(\Oc_v)[d]\hookrightarrow \mathcal{A}(\kappa(v)).
\]
This implies that $c_v = 0$. Then, by the same argument as in the proof of \emph{loc.~cit.} we deduce that $c=0$.   
\end{proof}

We now return to considering fundamental groups of curves over a number field.
Let $X$ be a $\pi_1$-nontrivial curve over $K$, as in Section~\ref{s:loc_cusp} we are interested in sections of the short exact sequence
\begin{equation}\label{s3:seq:etale}
1\to \Delta_X\to \Pi_X\to G_K\to 1.
\end{equation}
For every valuation $v\in \V(K)$ we have a commutative diagram
\begin{equation}\label{s3:diag:kummer_maps}
\begin{tikzcd}
X(K)\arrow[r, hook] \arrow[d, hook] & \Sec(\Pi_X,G_K)\arrow[d]\\
X(K_v)\arrow[r, hook]  & \Sec(\Pi_{X_v},G_v),
\end{tikzcd}
\end{equation}
where the horizontal arrows are given by injective Kummer maps and the right horizontal arrow is the restriction map.
Write $\Delta_X\twoheadrightarrow \Delta^{ab}_X$ for the topological abelianization of $\Delta_X$ and consider the short exact sequence
\[
1\to \Delta^{ab}_X\to \Pi^{(ab)}_X\to G_K\to 1,
\]
here $\Pi^{(ab)}_X$ is the maximal geometrically abelian quotient of $\Pi_X$. 
For a section $s$ of the sequence~\eqref{s3:seq:etale} denote by $s^{ab}$ the composition of $s$ with the surjection $\Pi_X\twoheadrightarrow \Pi^{(ab)}_X$.
Then we have a commutative diagram
\begin{equation}\label{s3:diag:kummer_maps_ab}
\begin{tikzcd}
X(K)\arrow[r, hook] \arrow[d, hook] & \Sec(\Pi^{ab}_X,G_K)\arrow[d]\\
X(K_v)\arrow[r, hook]  & \Sec(\Pi^{ab}_{X_v},G_v),
\end{tikzcd}
\end{equation}
with injective horizontal arrows which is compatible with the diagram~\eqref{s3:diag:kummer_maps} via the map $s\mapsto s^{ab}$.

Let $s$ be a section of the sequence~\eqref{s3:seq:etale}, we introduce the following local conditions on $s$. 
\begin{definition}
Let $v\in \V(K)$ be a valuation, we say that $s$ is \textit{geometric at}~$v$ if the local section $s_v$ comes from a (unique) $K_v$-rational point of $X$.
For a subset $\Omega\subset \V(K)$ we say that $s$ is \textit{geometric on}~$\Omega$ if $s$ is geometric at every $v\in \Omega$. When $s$ is geometric on a set of density one we say that $s$ is \emph{almost locally geometric} and when $s$ is geometric on $\V(K)$ we say that $s$ is \emph{locally geometric}.
\end{definition}

To check that two almost locally geometric sections of $X$ are conjugate it is enough to prove it after passing to the Jacobian of the compactification $\overline{X}$.
\begin{lemma}\label{s3:lem:compact_ab}
Assume that $X$ has positive genus. Let $s$ and $t$ be two almost geometric sections of the sequence~\eqref{s3:seq:etale} such that $(s^{cp})^{ab}$ and $(t^{cp})^{ab}$ are conjugate.
Then $s$ and $t$ are conjugate.
\end{lemma}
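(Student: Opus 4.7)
The plan is to reduce to local conjugacy of $s$ and $t$ on a set of density one, and then to apply \cite[Thm~1.1]{porowski2024} (the same input used at the end of the proof of Theorem~\ref{s2:thm:loc_cusp}) to deduce global conjugacy. The hypothesis on the abelianized compactifications should provide precisely this local information, once it is transported through the Abel-Jacobi map of $\overline{X}$ and the Kummer map of its Jacobian.

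First I will identify $\Delta^{ab}_{\overline{X}}$ with the Tate module $T(J)$ of the Jacobian $J$ of $\overline{X}$, so that $(s^{cp})^{ab}$ and $(t^{cp})^{ab}$ become two sections of the abelian extension
\[
1\to T(J)\to \Pi^{(ab)}_{\overline{X}}\to G_K\to 1.
\]
Since $T(J)$ is abelian, conjugacy of two sections of this extension is equivalent to the vanishing of their difference cocycle in $H^1(G_K,T(J))$; thus the hypothesis yields a class $c\in H^1(G_K,T(J))$ equal to zero, and in particular $\mathrm{loc}_v(c)=0$ for every $v\in\V(K)$.

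Next I will fix a density-one set $\Omega\subset \V(K)$ on which both $s$ and $t$ are locally geometric, with respective supports $x_v,y_v\in X(K_v)\subset\overline{X}(K_v)$. Choosing an Abel-Jacobi embedding $\overline{X}_{K_v}\hookrightarrow J_{K_v}$ based at $y_v$, the class $[x_v-y_v]\in J(K_v)$ maps under the Kummer map to the local restriction $\mathrm{loc}_v(c)\in H^1(G_v,T(J))$, which vanishes. Injectivity of the Kummer map $J(K_v)\hookrightarrow H^1(G_v,T(J))$ for abelian varieties over $p$-adic fields, combined with injectivity of Abel-Jacobi $\overline{X}(K_v)\hookrightarrow J(K_v)$ in positive genus, then forces $x_v=y_v$ in $X(K_v)$. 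The injectivity of the local Kummer map for $\pi_1$-nontrivial curves recalled in Section~\ref{s:loc_cusp} finally shows that $s_v$ and $t_v$ are conjugate as local sections for every $v\in\Omega$.

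Thus $s$ and $t$ are locally conjugate on a density-one set, and \cite[Thm~1.1]{porowski2024} immediately delivers that they are globally conjugate. I do not foresee a serious difficulty in this plan, since the argument is a chain of standard injectivity statements; the only delicate point is to check that the difference of two conjugate sections of an abelian extension really represents a trivial cohomology class, which reduces to the elementary observation that, for abelian kernel, conjugation of sections by a kernel element is precisely addition of a coboundary.
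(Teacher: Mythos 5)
Your proposal is correct and follows essentially the same route as the paper: reduce to local conjugacy of $s_v$ and $t_v$ on a density-one set where both sections are geometric, then conclude by the local-to-global conjugacy theorem of~\cite{porowski2024}. Your explicit detour through the Jacobian (Abel--Jacobi plus injectivity of the Kummer map for abelian varieties over $p$-adic fields) is just an unpacking of the paper's direct appeal to the injectivity of the local abelianized Kummer map $\overline{X}(K_v)\hookrightarrow\Sec(\Pi^{ab}_{\overline{X}_v},G_v)$ in diagram~\eqref{s3:diag:kummer_maps_ab}, so the two arguments coincide in substance.
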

\begin{proof}
Let $\Omega\subset\V(K)$ be a set of valuations of density one such that $s$ and $t$ are geometric on $\Omega$; fix a valuation $v\in \Omega$.
Then $(s^{cp}_v)^{ab}$ and $(t^{cp}_v)^{ab}$ are also conjugate.
Since the local sections $s^{cp}_v$ and $s^{cp}_v$ are geometric, it follows from the injectivity of the lower horizontal arrow in the diagram~\eqref{s3:diag:kummer_maps_ab} for $\overline{X}$ that sections $s^{cp}_v$ and $t^{cp}_v$ are conjugate.
Similarly, as $s_v$ and $t_v$ are geometric it follows that the local sections $s_v$ and $t_v$ are conjugate.
Therefore sections $s$ and $t$ are locally conjugate on $\Omega$ and we may apply~\cite[Thm 4.6]{porowski2024} to conclude.
\end{proof}

Fix a model $\overline{\mathcal{X}}$ of $\overline{X}$ over $\Oc_K$, namely a smooth proper morphism $\overline{\mathcal{X}}\to \Spec\Oc_K$ whose generic fibre is identified with the structure morphism $\overline{X}\to \Spec(K)$.
Let $\Omega\subset \V(K)$ be a nonempty set of valuations and $s$ be a section of the sequence \eqref{s3:seq:etale}, suppose that $s$ is geometric on $\Omega$.
Then for every valuation $v\in \Omega$ the local section $s_v$ determines a unique point in $X(K_v)$, which by abuse of notation we also denote by $s_v$.
Denote by $\bar{s}_v$ the image of $s_v$ along the reduction map
\[
X(K_v)\subset \overline{X}(K_v) = \overline{\mathcal{X}}(\Oc_v) \to \overline{\mathcal{X}}(\kappa(v)).
\]
Thus we obtain a collection of points
\[
(\bar{s}_v)_v,\; \textrm{for}\; v\in \Omega,
\]
called the \emph{reduction} of $s$ on $\Omega$.
This notion depends on a chosen model $\overline{\mathcal{X}}$, however any two models of $\overline{X}$ are isomorphic over a nonempty open subset of $\Spec \Oc_K$.
Therefore the reduction of a section on $\Omega$, modulo a finite set, may be regarded as being independent of this choice.
In our applications the set $\Omega$ will usually be a set of positive upper density, in particular infinite.

Consider now a collection $s,s_1,\ldots, s_n$ of almost locally geometric sections of the sequence~\eqref{s3:seq:etale} for some $n\ge 1$.
Let $\Omega\subset\V(K)$ be a set of valuations of density one such that all sections $s,s_1,\ldots,s_n$ are geometric on $\Omega$. 
We introduce the following analogue of Definition~\ref{s2:def:weak_cover_coh}.
\begin{definition}
We say that sections $(s_i)_i$ \textit{weakly cover} the section $s$ if there exists a subset 
\[
\Omega'\subset \Omega 
\]
of density one such that for every $v \in \Omega'$ there exists an integer $1\le i\le n$ such that $\bar{s}_{i,v} = \bar{s}_v$.
\end{definition}
Note that the notion of a weak cover is independent of the choice of a particular model of $X$ and the subset $\Omega$.
Also, this definition is a generalization of the notion of a cover of a section from~\cite[\S3]{porowski2024} for almost locally geometric sections.

With this definition, we can apply Theorem~\ref{s3:thm:weak_cover} to slightly improve the results of~\cite{porowski2024} as follows.
\begin{theorem}\label{s3:thm:weak_cover_sections}
Assume that $X$ is either hyperbolic or proper. Let 
\[
s, s_1,\ldots ,s_n
\]
be almost locally geometric sections of the sequence~\eqref{s3:seq:etale}, for some $n\ge 1$.
Suppose that sections $(s_i)_i$ weakly cover the section $s$.
Then there exists $1\le i\le n$ such that sections $s_i$ and $s$ are conjugate. 
\end{theorem}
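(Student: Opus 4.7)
The plan is to translate the statement into one about cohomology classes on the Jacobian of the compactification, apply Theorem~\ref{s3:thm:weak_cover}, and then lift the resulting equality back to $\Pi_X$ via Lemma~\ref{s3:lem:compact_ab}.

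First I would reduce to the case where $X$ has positive genus. When $X$ is proper and $\pi_1$-nontrivial this is automatic; when $X$ is hyperbolic of genus zero I would pass to a connected finite \'etale Galois cover $X'\to X$ of positive genus through which $s$ factors. Any $s_i$ whose image is not $\Pi_X$-conjugate into $\Pi_{X'}$ cannot be conjugate to $s$ and is discarded; for the remaining $s_i$ the collection is enlarged by all (finitely many) lifts to sections of $\Pi_{X'}\twoheadrightarrow G_K$. Given a fixed lift $\tilde s$ of $s$ and a valuation $v$ with $\bar s_{i,v}=\bar s_v$, at least one lift $\tilde s_i$ of $s_i$ satisfies $\bar{\tilde s}_{i,v}=\bar{\tilde s}_v$, because the deck transformation group acts transitively on the fibre of $\overline{X'}\to\overline{X}$ over $\bar s_v$ for $v$ in a density-one set; so the enlarged collection weakly covers $\tilde s$.

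With $X$ of positive genus, I would set $J=\mathrm{Jac}(\overline{X})$ so that $\Delta^{ab}_{\overline{X}}=T(J)$, and take $(s^{cp})^{ab}$ as basepoint on the $H^1(G_K,T(J))$-torsor of sections of $\Pi^{(ab)}_{\overline{X}}\twoheadrightarrow G_K$. For each $i$ define
\[
c_i:=(s_i^{cp})^{ab}-(s^{cp})^{ab}\in H^1(G_K,T(J)).
\]
For every $v$ at which both $s$ and $s_i$ are geometric, the Kummer map identifies $c_i|_{G_v}$ with the image in $J(K_v)$ of the degree-zero divisor $s_{i,v}-s_v$; this point extends to an $\Oc_v$-point of the N\'eron model $\mathcal{J}$, which is an abelian scheme over a density-one open subscheme of $\Spec\Oc_K$, so $c_i$ is a Selmer class in the sense of Definition~\ref{s3:def:selmer_class}. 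Moreover, over such a subscheme the difference morphism $\overline{\mathcal{X}}\times\overline{\mathcal{X}}\to\mathcal{J}$ extends, so the equality $\bar s_{i,v}=\bar s_v$ in $\overline{\mathcal{X}}(\kappa(v))$ forces $\bar c_{i,v}=0$ in $\mathcal{J}(\kappa(v))$; thus $(c_i)_i$ weakly cover the zero class.

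At this point Theorem~\ref{s3:thm:weak_cover} will yield some $i$ with $c_i=0$, i.e.\ $(s_i^{cp})^{ab}$ and $(s^{cp})^{ab}$ are conjugate sections of $\Pi^{(ab)}_{\overline{X}}\twoheadrightarrow G_K$, and Lemma~\ref{s3:lem:compact_ab} will upgrade this to conjugacy of $s$ and $s_i$ in $\Pi_X$. The step I expect to be most delicate is not the cohomological translation but the reduction to positive genus: one must verify that enlarging the family by all lifts to a neighbourhood of $s$ genuinely preserves the weak cover hypothesis on a set of density one, which requires careful bookkeeping of the fibres of $\overline{X'}\to\overline{X}$ and of the action of the deck transformation group on the various reductions.
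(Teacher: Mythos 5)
Your proposal is correct and follows essentially the same route as the paper's proof: reduce to a proper curve of positive genus via a neighbourhood of $s$ and Lemma~\ref{s3:lem:compact_ab}, translate the sections into classes in $H^1(G_K,T(J))$ for the Jacobian, observe these are Selmer classes weakly covering zero, and invoke Theorem~\ref{s3:thm:weak_cover}. The only difference is one of detail: you spell out the deck-transformation bookkeeping for the reduction to positive genus and the N\'eron-model argument for the Selmer property, both of which the paper handles by citing~\cite{porowski2024} or leaves implicit.
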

\begin{proof}
Note that our assumptions are stable with respect to finite field extensions of $K$. Let $\Pi_{X'}\subset \Pi_X$ be a neighbourhood of the section $s$ corresponding to a finite \'etale cover $X'\to X$.
By considering splittings of sections $s_i$ in $X'$ (as in~\cite[\S 3]{porowski2024}) we see that after extending the base field $K$ it is enough to prove the statement for $X'$; note that this operation may increase the number $n$.
Thus we may assume that $X$ has positive genus.
Furthermore, thanks to Lemma~\ref{s3:lem:compact_ab}, we may replace $X$ by its compactification and assume that $X$ is a proper curve of positive genus.

Consider the short exact sequence
\[
1\to \Delta^{ab}_X\to \Pi^{(ab)}_X\to G_K\to 1.
\]
Recall that for every $1\le i\le n$ the difference between sections $s^{ab}_i$ and $s^{ab}$ is a cohomology class 
\[
c_i\in H^1(G_K,\Delta^{ab}_X)
\]
and $\Delta^{ab}_X$ is isomorphic to $T(A)$ where $A$ is the Jacobian variety of $X$.

By the previous discussion and the weak cover assumption we see that classes $(c_i)_i$ weakly cover the zero class.
Hence by Theorem~\ref{s3:thm:weak_cover} we see that there exists $1\le i\le n$ such that $s^{ab}_i$ and $s^{ab}$ are conjugate. Thus by Lemma~\ref{s3:lem:compact_ab} we conclude that $s_i$ and $s$ are conjugate. 
\end{proof}

\begin{remark}\label{s3:rem}
The conclusion of Theorem~\ref{s3:thm:weak_cover_sections} does not hold for $X=\mathbb{G}_m$.
Indeed, we claim that there exists an element $x \in \widehat{\Q^{\times}}$ such that for every prime number $p$ the image of $x$ in $\widehat{\Q^{\times}_p}$ lies in $\Q_p^{\times}$ and has positive $p$-adic valuation.
To construct such an element, choose a sequence $(n_q)_q$ of integers $n_q> 1$ indexed by all prime numbers $q$ such that for every prime number $p$ we have
\begin{enumerate}[(i)]
\item $p-1$ divides $n_q$ for almost all $q$,
\item $v_p(n_q) \to \infty$ as $q\to \infty$,
\end{enumerate}  
here $v_p$ is the $p$-adic valuation. Then define 
\[
x = \prod_q q^{n_q},
\]
and note that by the choice of the sequence $n_q$ the above product converges in every $\Z_p$ to an element of positive valuation.

Now, let $s$ be a geometric section of $(\mathbb{G}_m)_{\Q}$ corresponding to the point $1\in \mathbb{G}_m(\Q)$; consider $s$ as a trivialization of the $\widehat{\Q^{\times}}$-torsor of conjugacy classes of sections of the exact sequence
\[
1\to \Zhat(1) \to \Pi_{(\mathbb{G}_m)_{\Q}} \to  G_{\Q}\to 1.
\]
Pick any $y\in \Q^{\times}$ with $y \ne 1$ and consider two locally geometric sections $t_1$ and $t_2$ corresponding to elements $x$ and $xy$, respectively.
Then, regarding $\mathbb{P}^1_{\Z}$ as a model over $\Z$ of the compactification of $(\mathbb{G}_m)_{\Q}$ we see that for almost all prime numbers $p$ both sections $t_1$ and $t_2$ have the same reduction, equal to the point $0\in \mathbb{P}^1_{\mathbb{F}_p}$.
This means that the section $t_1$ weakly covers the section $t_2$, on the other hand they are clearly not conjugate as $y \ne 1$.
This does not contradict Theorem~\ref{s3:thm:weak_cover} since the cohomology class determined by $x$ is not a Selmer class. 
\end{remark}

We define another class of sections (see also~\cite[\S2.2.1]{stix2015birational}).
\begin{definition}
Assume that $X$ is either proper or hyperbolic and let $s$ be a section of the sequence~\eqref{s3:seq:etale}. We say that $s$ is a \textit{Selmer} section if for every $v\in \V(K)$ the section $s$ is either cuspidal at $v$ or geometric at $v$.
\end{definition}
Note that when $X$ is proper the notions of Selmer and locally geometric sections coincide.
We can extend the definition of the reduction of a section at a valuation $v\in \V(K)$ to the Selmer case, as follows.
Fix a model $\overline{\mathcal{X}}$ of $\overline{X}$ as before, for a Selmer section $s$ and a valuation $v\in \V(K)$ such that $s_v$ is cuspidal we define $\bar{s}_v$ as the image of the unique cusp determined by $s_v$ under the reduction map
\[
\overline{X}(K_v) = \overline{\mathcal{X}}(\Oc_v) \to \overline{\mathcal{X}}(\kappa(v)).
\] 
Let $s$ be a Selmer section of the sequence~\eqref{s3:seq:etale} and $\Omega\subset \V(K)$ be an infinite subset.
We say that the section $s$ \emph{reduces to cusps} on $\Omega$ if for all but finitely many $v\in\Omega$ the reduction $\bar{s}_v$ coincides with the reduction of a cusp of $X$.
As previously, this property does not depend on the choice of a model of $X$.
With this definitions we can generalize Theorem~\ref{s2:thm:loc_cusp} as follows.
\begin{corollary}\label{s3:cor:cusp_red}
Assume that $X$ is a hyperbolic curve.
Suppose that $s$ is a Selmer section of the sequence~\eqref{s3:seq:etale} which reduces to cusps on a set of valuations having density one.
Then $s$ is a cuspidal section.
\end{corollary}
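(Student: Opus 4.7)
My plan is to push the section forward to the compactification $\overline{X}$, apply the weak-cover machinery of Theorem~\ref{s3:thm:weak_cover_sections} to force the compactified section to be globally geometric at a single cusp, and then invoke Theorem~\ref{s2:thm:loc_cusp} to conclude.

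First I would replace $X$ by a suitable neighbourhood of $s$ so that $\overline{X}$ has positive genus (the same manoeuvre used in the proof of Theorem~\ref{s3:thm:weak_cover_sections}), and then perform a finite base extension $L/K$ so that every cusp of $X$ becomes rational. The density-one hypothesis is stable under finite base change, and the particular cusp that will eventually be identified with the support of $s$ must be $K$-rational by $G_K$-equivariance of $s$, so no genuine descent question arises at the end.

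The key observation is that the Selmer hypothesis forces $s^{cp}$ to be locally geometric at every valuation: whether $s_v$ is geometric at some $x_v \in X(K_v)$ or cuspidal at some cusp $c_j$, the image in $\overline{X}$ is a $K_v$-point, and the reduction $\bar{s}^{cp}_v$ coincides with $\bar{s}_v$. Letting $t_1,\ldots,t_m$ denote the geometric sections of $\Pi_{\overline{X}}$ at the rational cusps $c_1,\ldots,c_m$, one has $\bar{t}_{i,v} = \bar{c}_i$ at every good-reduction place, so after discarding a finite bad-reduction set the assumption that $\bar{s}_v$ equals a cusp reduction on a density-one set translates precisely into the statement that $(t_i)_i$ weakly covers $s^{cp}$.

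Applying Theorem~\ref{s3:thm:weak_cover_sections} to the proper positive-genus curve $\overline{X}$ then yields an index $i$ for which $s^{cp}$ is conjugate to $t_i$, that is, $s^{cp}$ is globally geometric with support at $c_i$. Now for any $v \in \V(K)$ the Selmer alternative presents $s^{cp}_v$ as geometric at some $y_v \in \overline{X}(K_v)$; by injectivity of the Kummer map we must have $y_v = c_i$. If $s_v$ were geometric then $y_v$ would lie in $X(K_v)$, contradicting $c_i \in X^{csp}$; hence $s_v$ is cuspidal at $c_i$. Thus $s$ is cuspidal at every valuation, and Theorem~\ref{s2:thm:loc_cusp} gives global cuspidality.

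The main obstacle I expect is the bookkeeping around models and base change — checking that the reductions used to define ``coincides with a cusp'' are compatible in the chosen models of $\overline{X}$ over $K$ and over $L$, and that passing to a neighbourhood of $s$ does not destroy the reduction-to-cusps property — but all of these are routine once the framework of Section~\ref{s:weak_covers} is in place.
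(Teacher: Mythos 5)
Your proposal is correct and follows essentially the same route as the paper: pass to a positive-genus neighbourhood, extend the base field so the cusps are rational, observe that the geometric sections of $\Pi_{\overline{X}}$ at the cusps weakly cover $s^{cp}$, apply Theorem~\ref{s3:thm:weak_cover_sections} to identify $s^{cp}$ with one of them, deduce local cuspidality of $s$ at every valuation, and conclude with Theorem~\ref{s2:thm:loc_cusp}. The only difference is that you spell out the final Kummer-injectivity step that the paper leaves implicit.
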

\begin{proof}
By passing to a neighbourhood of $s$ we may assume that $X$ has positive genus.
Thanks to Theorem~\ref{s2:thm:loc_cusp} is it enough to show that for every valuation $v\in \V(K)$ the local section $s_v$ is cuspidal.
Extend the base field $K$ such that all cusps of $X$ become $K$-rational.
Write $s_1,\ldots,s_n$ for the geometric sections of the sequence
\[
1\to \Delta_{\overline{X}}\to \Pi_{\overline{X}}\to G_K\to 1
\]  
corresponding to the rational points of $\overline{X}$ determined by the cusps.
Then our assumptions imply that sections $(s_i)_i$ weakly cover the (locally geometric) section $s^{cp}$.
Thus we see from Theorem~\ref{s3:thm:weak_cover_sections} that for some $1\le i\le n$ sections $s^{cp}$ and $s_i$ are conjugate, hence for every $v\in \V(K)$ we have $s^{cp}_v= s_{i,v}$ as $K_v$-rational points of $X$. Therefore the local section $s_v$ is cuspidal.
\end{proof}

\section{\texorpdfstring{$\ell$}{Lg}-adic representations with eigenvalue one}\label{s:ell-adic}
In this section we prove a technical lemma which says that any $\ell$-adic representation of $G_K$ which admits a root of unity as an eigenvalue for many Frobenius elements in fact admits an open subgroup of $G_K$ whose elements enjoy the same property.
This will be used in the next section to strengthen our results from Sections~\ref{s:loc_cusp} and~\ref{s:weak_covers}.

For a finite extension $L/K$ of number fields, write 
\[
res_{L/K}\colon \V(L)\twoheadrightarrow \V(K)
\]
for the natural restriction map. We also define a subset
\[
Split(L/K)\subset \V(K)
\]
of all valuations of $K$ which are split completely in $L$.

\begin{definition}
Let $\Omega$ be a subset of $\V(K)$, we say that $\Omega$ has \textit{strongly positive} density if for every finite field extension $L/K$ the preimage $res^{-1}_{L/K}(\Omega)\subset \V(L)$ has positive upper density.
\end{definition}
Equivalently, a set $\Omega\subset \V(K)$ has strongly positive density if for every finite field extension $L/K$ the intersection $\Omega\cap Split(L/K)$ has positive upper density.  

\begin{remark}
We give a few examples of sets $\Omega$ which have strongly positive density.
\begin{enumerate}[(i)]
\item Any set of upper density one has strongly positive density.
\item For every tower of finite field extensions $K\subset L\subset M$ the set $res^{-1}_{M/K}(Split(L/K))$ has density one; in particular the set $\Omega = Split(L/K)$ has strongly positive density.
\item Let $\Omega\subset \V(K)$ be a section of the surjection $\V(K)\twoheadrightarrow \V(\Q)$.
Then for every finite field extension $L/K$ which is Galois over $\mathbb{Q}$ the preimage $res^{-1}_{L/K}(\Omega)$ has density $1/[K:\Q]$, in particular the set $\Omega$ has strongly positive density.
\end{enumerate}
\end{remark}
Let us also note that if $\Omega\subset \V(K)$ has strongly positive density and for some natural number $n\ge 1$ we are given subsets $\Omega_i\subset \V(K)$ for $1\le i\le n$ such that $\Omega\subset \bigcup_i \Omega_i$ then there exists $1\le i\le n$ such that $\Omega_i$ has strongly positive density.
\begin{remark}
Here we construct an example of a subset $\Omega\subset \V(K)$ with strongly positive density whose preimages $res^{-1}_{L/K}(\Omega)$ have arbitrarily small upper density for an increasing sequence of field extensions $L/K$.
In the following we denote the density by $\delta$ and the upper density by $\delta^{sup}$.
Pick any $\varepsilon > 0$ and consider a tower of finite Galois extensions of $K$
\[
K=K_0\subsetneq K_1\subsetneq K_2\subsetneq ...
\]
such that $\cup_i K_i = \overline{K}$.
Denote $S_i = Split(K_i/K)$, thus we have a descending sequence of sets
\[
\V(K)=S_0\supsetneq S_1\supsetneq S_2\supsetneq ...
\]
Note that by Chebotariev's density theorem we have $\delta(S_i) = 1/[K_i:K]$ thus $\delta (S_i\setminus S_{i+1}) > 0$ for every $i\ge 0$. Hence we may choose, for every $j\ge 0$, a set $\Omega_j\subset S_j\setminus S_{j+1}$ such that 
\[
0 < \delta^{sup}(\Omega_j) \le \frac{\varepsilon}{2^{j+1}[K_j:K]}.
\]
Fix $i\ge 0$, note that by construction $res^{-1}_{K_i/K}(\Omega_j)$ has density zero for all $j<i$. On the other hand, for $j \ge i$ we see that
\[
0 < \delta^{sup}(res^{-1}_{K_i/K}(\Omega_j))\le \frac{\varepsilon[K_i:K]}{2^{j+1}[K_j:K]}\le \frac{\varepsilon}{2^{j+1}}.
\]
Define $\Omega = \cup_j \Omega_j$ then for every $i\ge 0$ we have
\[
0 < \delta^{sup}(res^{-1}_{K_i/K}(\Omega)) \le \varepsilon (\frac{1}{2^{i+1}}+ \frac{1}{2^{i+2}}+ \ldots) = \frac{\varepsilon}{2^{i}},
\]
which goes to zero as $i$ goes to infinity.
\end{remark}

Let $\ell$ be a prime number, $V$ be a finite dimensional vector space over $\Qell$ and 
\[
\rho \colon G_K\to \GL(V)
\]
be a continuous representation. Denote by 
\[
\mathcal{G} = \rho(G_K)
\]
the image of $G_K$ in $\GL(V)$, thus $\mathcal{G}$ is an $\ell$-adic Lie group.
For each valuation $v\in \V(K)$ we have a surjective homomorphism 
\[
G_v\twoheadrightarrow G_{\kappa(v)}
\]
to the absolute Galois group of a finite field $\kappa(v)$; any element $F_v\in G_v$ whose image in $G_{\kappa(v)}$ is the Frobenius element will be called a \emph{Frobenius lift}. 

\begin{proposition}\label{s4:prop:eigenvalues_on_strict}
Let $\Omega\subset \V(K)$ be a set of valuations with strongly positive density. Suppose that for every $v\in \Omega$ there exists a Frobenius lift $F_v$ such that the operator $\rho(F_v)$ has an eigenvalue which is a root of unity.
Then there exists an open subgroup $U\subset G_K$ such that for every $g\in U$ the operator $\rho(g)$ has an eigenvalue which is a root of unity.
\end{proposition}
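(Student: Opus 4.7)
The plan is to linearise the eigenvalue condition via the $\ell$-adic logarithm after passing to a suitable finite Galois extension, and then to apply Chebotariev-style equidistribution in a compact $\ell$-adic Lie group to upgrade the density-positive hypothesis on Frobenius elements to a statement about an entire open subgroup.

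First, I would fix a $\mathcal{G}$-stable $\Z_\ell$-lattice in $V$ so that $\mathcal{G}\subseteq \GL_d(\Z_\ell)$, pick $m=1$ for $\ell$ odd and $m=2$ for $\ell=2$, and let $L/K$ be the finite Galois extension cut out by $G_L:=\rho^{-1}(1+\ell^m M_d(\Z_\ell))$. Then $\mathcal{H}:=\rho(G_L)\subseteq 1+\ell^m M_d(\Z_\ell)$, the $\ell$-adic logarithm is a homeomorphism $\log\colon \mathcal{H}\to \mathfrak{h}:=\log\mathcal{H}\subseteq \ell^m M_d(\Z_\ell)$, every eigenvalue $\lambda$ of every $g\in \mathcal{H}$ satisfies $v(\lambda-1)\ge m$, and a direct valuation computation in $\overline{\Qell}$ shows that the only root of unity $\zeta$ with $v(\zeta-1)\ge m$ is $\zeta=1$. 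Consequently, for $g\in \mathcal{H}$ the operator $\rho(g)$ has a root-of-unity eigenvalue if and only if $1$ is an eigenvalue, if and only if $\det(\log g)=0$.

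By strong positivity the set $\Omega':=res^{-1}_{L/K}(\Omega\cap Split(L/K))$ has positive upper density in $\V(L)$, and for each $w\in \Omega'$ the prime $v:=res_{L/K}(w)$ splits completely in $L$, so a Frobenius lift $F_w\in G_L$ can be chosen to coincide with the given Frobenius lift $F_v\in G_v$; hence $\rho(F_w)\in \mathcal{H}$ has a root-of-unity eigenvalue and so $\det(\log \rho(F_w))=0$ for every $w\in \Omega'$. Now view $\det$ as a polynomial function on the $\Qell$-vector subspace $\mathfrak{h}_{\Qell}:=\mathfrak{h}\otimes_{\Z_\ell}\Qell\subseteq M_d(\Qell)$. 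If $\det$ vanishes identically on $\mathfrak{h}_{\Qell}$, then every element of $\mathcal{H}$ has $1$ as an eigenvalue and $U:=G_L$ is the desired open subgroup. Otherwise the zero locus $Z\subseteq \mathfrak{h}_{\Qell}$ is a proper Zariski closed subvariety, whence $S:=\log^{-1}(Z\cap \mathfrak{h})\subseteq \mathcal{H}$ is a closed conjugation-invariant subset of Haar measure zero.

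To rule out the non-trivial case I would show that $\{w\in \V(L):\rho(F_w)\in S\}$ has upper density zero. Given $\epsilon>0$, by outer regularity of Haar measure on the compact group $\mathcal{H}$ together with the fact that cosets of open normal subgroups form a neighbourhood basis, one finds an open normal subgroup $N\triangleleft \mathcal{H}$ such that the union $C$ of all cosets of $N$ meeting $S$ has $\mu(C)<\epsilon\mu(\mathcal{H})$; in particular the image of $S$ in $\mathcal{H}/N$ is conjugation-invariant of relative size less than $\epsilon$. Chebotariev's density theorem applied to the finite quotient $\mathcal{H}/N$ then shows that $\{w:\rho(F_w)\in C\}$ has natural density less than $\epsilon$, and hence $\{w:\rho(F_w)\in S\}$ has upper density less than $\epsilon$. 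Since $\epsilon>0$ was arbitrary this upper density is zero, contradicting the positive upper density of $\Omega'\subseteq \{w:\rho(F_w)\in S\}$. The hard part is precisely this final translation of a measure-zero statement on the $\ell$-adic Lie group $\mathcal{H}$ into a Chebotariev-style density statement via its finite quotients; the preliminary local-field computation identifying the root-of-unity condition on eigenvalues with the single polynomial equation $\det(\log g)=0$ deep inside the pro-$\ell$ filtration is also a delicate ingredient.
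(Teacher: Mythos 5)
Your argument is correct, and it reaches the conclusion by a genuinely different route from the paper, even though both share the same skeleton (reduce the eigenvalue condition to membership in a thin analytic subset of the image group, rule out the thin case by Chebotariev applied to finite quotients plus outer regularity of Haar measure, and use strong positivity to place a Frobenius inside a prescribed open subgroup). The paper first bounds the order of the roots of unity that can occur as eigenvalues by a uniform $d$, cuts out the locus $Z\subset \GL_V$ of matrices with a $d$-th root of unity as eigenvalue by a resultant, and then applies Serre's dichotomy to the closed analytic set $\mathcal{G}\cap Z(\Qell)$: the interior part is open and closed, hence a union of cosets of an open subgroup $\mathcal{U}$, and strong positivity forces $\mathcal{U}$ itself to lie in the locus; the complement has empty interior, hence measure zero, hence contributes density zero. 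You instead pass at the outset to the principal congruence subgroup $G_L=\rho^{-1}(1+\ell^m M_d(\Z_\ell))$, where the only root of unity that can occur as an eigenvalue is $1$, and linearise via $\log$ so that the condition becomes the single polynomial equation $\det(\log g)=0$ on the Lie algebra; the dichotomy ``$\det$ vanishes identically on $\mathrm{Lie}(\mathcal{H})\otimes\Qell$ or its zero locus is a proper hypersurface'' then replaces the interior/boundary decomposition, and the open subgroup $U=G_L$ is produced explicitly at the start rather than extracted at the end. This buys a cleaner identification of $U$ and avoids both the resultant construction and the bound on root-of-unity orders; the paper's route avoids the $\ell$-adic $\log$ and Lie-algebra technology. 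It also makes the paper's semisimplification/Khare--Rajan step superfluous: in your finite-quotient version of Chebotariev each quotient $\mathcal{H}/N$ corresponds to a finite extension ramified at only finitely many places, so unboundedly ramified $\rho$ causes no harm.

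Two points deserve tightening. First, $\mathfrak{h}=\log\mathcal{H}$ is in general not a $\Z_\ell$-module, so $\mathfrak{h}\otimes_{\Z_\ell}\Qell$ should be replaced by the $\Qell$-span of $\log\mathcal{H}$ (equivalently $\mathrm{Lie}(\mathcal{H})\otimes\Qell$), and the step ``$Z$ proper Zariski-closed $\Rightarrow$ $\log^{-1}(Z\cap\mathfrak{h})$ has Haar measure zero'' needs the facts that $\log\mathcal{H}$ is open in this span and that the pushforward of Haar measure under $\log$ lies in the Lebesgue class (e.g.\ via an open uniform subgroup $\mathcal{H}_0$ with $\log\mathcal{H}_0$ a full lattice, treating the finitely many cosets separately); this is exactly where the same ``closed analytic set with empty interior has measure zero'' ingredient that the paper cites from Serre re-enters, so it should be stated rather than absorbed into a ``whence''. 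Second, in the Chebotariev step you should phrase the set being bounded as the set of $w$ unramified in the finite extension cut out by $N$ whose Frobenius \emph{conjugacy class} in $\mathcal{H}/N$ meets the (conjugation-invariant, $N$-saturated) set $C$, discarding the finitely many ramified places of that extension. With these adjustments the proof is complete.
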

\begin{proof}
After replacing the representation $\rho$ by its semisimplification we may assume that $\rho$ is semisimple. In particular, by~\cite{khare_rajan_2001} the subset $S\subset \V(K)$ of valuations $v\in \V(K)$ such that $\rho$ is unramified at $v$ has density one.  

First we observe that there exists an integer $d\ge 1$ such that the roots of unity which appear as eigenvalues of $\rho(g)$ for $g\in G_K$ have order bounded by $d$. Indeed, since the characteristic polynomial of $\rho(g)$ is of degree $\dim V$ and with coefficients in $\Qell$, this follows from the fact that there are only finitely many extensions of $\Qell$ of bounded degree together with the finiteness of the group of roots of unity for local fields. We pick such an integer $d$.

By considering the resultant of the characteristic polynomial of the universal element of $\GL_V$ with the polynomial $x^d-1$ we see that there is a closed subscheme $Z\hookrightarrow \GL_V$ with the following property: for a linear map $M\in \GL_V(\Qell)$ we have $M\in Z(\Qell)$ if and only if $M$ has an eigenvalue which is a $d$th root of unity.
Let 
\[
\mathcal{Z} = \mathcal{G}\cap Z(\Qell)
\]
be the subset of $\mathcal{G}$ containing all elements which admit a $d$-th root of unity as an eigenvalue; thus $\mathcal{Z}$ is a closed analytic subset of $\mathcal{G}$.
Let $\Omega_0\subset S$ be the subset of valuations $v\in S$ such that $\rho(F_v) \in \mathcal{Z}$, by our assumption $\Omega_0$ has strongly positive density.

We now use a similar argument as in the proof of~\cite[\S6, Prop 13]{serre1981chebotarev}. Let $\mathcal{Z}_1$ be the interior of $\mathcal{Z}$ in $\mathcal{G}$ and $\mathcal{Z}_2 = \mathcal{Z}\setminus \mathcal{Z}_1$.
Since interiors of analytic subsets are closed we see that $\mathcal{Z}_1$ is an open and closed analytic subset of $\mathcal{G}$ and $\mathcal{Z}_2$ is a closed analytic subset with empty interior.  
Thus we have a decomposition 
\[
\Omega_0 = \Omega_1 \sqcup \Omega_2, 
\]
where for $i=1,2$ we write $\Omega_i$ for the subset of all $v\in\Omega_0$ such that $\rho(F_v)\in \mathcal{Z}_i$.
Choose a Haar measure on $\mathcal{G}$, as $\mathcal{Z}_2$ is an analytic subset with empty interior it follows that $\mathcal{Z}_2$ has measure zero.
Therefore from the Chebotariev's density theorem we see that $\Omega_2$ has density zero; in particular this implies that $\Omega_1$ and $\mathcal{Z}_1$ are nonempty.

As $\mathcal{Z}_1$ is a nonempty open and closed subset of $\mathcal{G}$, there exists an open subgroup $\mathcal{U}\subset \mathcal{G}$ such that $\mathcal{Z}_1$ is a disjoint sum of cosets of $\mathcal{U}$.
Since $\Omega_0$ has strongly positive density we see that the intersection $\mathcal{Z}_1\cap\mathcal{U}$ is nonempty, therefore $\mathcal{U}\subset \mathcal{Z}_1$.
Finally, by taking $U = \rho^{-1}(\mathcal{U})$ we obtain the desired subgroup of $G_K$. 
\end{proof}

\section{Noncuspidal Selmer sections}\label{s:noncuspidal_selmer}
In this section we will improve further on the results of Sections~\ref{s:loc_cusp} and~\ref{s:weak_covers} 	by relaxing the density assumption.

First we will recall a few facts concerning degenerations of \'etale covers of smooth curves.
Let $S$ be the spectrum of a discrete valuation ring $R$, write $K$ for the fraction field of $R$, $\kappa$ for the residue field of $R$ and $p\ge 0$ for the characteristic of $\kappa$.
Denote the generic point of $S$ by $\eta = \Spec K$ and the geometric generic point by $\bar{\eta} = \Spec \overline{K}$; similarly denote by $s = \Spec\kappa$ the closed point of $S$ and by $\bar{s} = \Spec \bar{\kappa}$ the geometric closed point.
Also, we fix a prime number $\ell$ which is different from the residue characteristic $p$.

Let $Y\to S$ be a pointed stable curve over $S$ with smooth generic fibre $Y_{\eta}$.
Thus $Y_{\bar{s}}$ is a pointed stable curve over $\bar{\kappa}$, write $\Gamma(Y_{\bar{s}})$ for the dual (semi) graph of $Y_{\bar{s}}$ (see~\cite[Appendix]{mochizuki_2004}).
There is a notion of the admissible fundamental group $\pi^{adm}_1(Y_{\bar{s}})$ of the pointed stable curve $Y_{\bar{s}}$ classifying tame admissible coverings of $Y_{\bar{s}}$, see~\cite{yang_2018} or~\cite{wewers_1999} for a definition and some basic properties.
Every tame admissible cover of a pointed stable curve is also a pointed stable curve.
The group $\pi^{adm}_1(Y_{\bar{s}})$ has also a combinatorial description as the fundamental group of a graph of groups whose underlying graph is isomorphic to $\Gamma(Y_{\bar{s}})$, see~\cite[Appendix]{mochizuki_2004}.     
There is a surjective specialization map
\[
\pi^{et}_1(Y_{\bar{\eta}}) \twoheadrightarrow \pi^{adm}_1(Y_{\bar{s}})
\]
which induces an isomorphism on maximal prime to $p$ quotients.
Finally, suppose we are given a finite \'etale map $Y'\to Y$ which is a Galois cover of degree prime to $p$.
Then it follows from Abhyankar's lemma that $Y'$ is also a pointed stable curve and the map $Y'\to Y$ is a morphism of pointed stable curves over $S$.

The next lemma will be used only in the case of residue characteristic zero; we state it in general for completeness.
\begin{lemma}\label{s5:lem:nontree_cover}
Suppose that the special fibre $Y_{\bar{s}}$ has at least two irreducible components. Then there exists a pointed stable curve $Y'$ over $S$ of genus $\ge 2$ and a finite morphism of pointed stable curves $Y'\to Y$ such that
\begin{enumerate}[(i)]
\item $Y'_{\bar{\eta}}\to Y_{\bar{\eta}}$ is a Galois \'etale cover whose degree is a power of $\ell$,
\item the dual graph $\Gamma(Y'_{\bar{s}})$ is not a tree.
\end{enumerate}
\end{lemma}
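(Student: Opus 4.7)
The plan is to construct $Y'$ as the stable model over $S$ of a suitable Galois \'etale cover of the geometric generic fibre $Y_{\bar\eta}$ of $\ell$-power degree. In residue characteristic zero (where the lemma will be applied), the specialization isomorphism $\pi_1^{et}(Y_{\bar\eta}) \xrightarrow{\sim} \pi_1^{adm}(Y_{\bar{s}})$ recasts the problem as finding a Galois tame admissible cover of $Y_{\bar{s}}$ of $\ell$-power degree; the genus-$\geq 2$ requirement can then be achieved by composing with a further \'etale cover of $\ell$-power degree if necessary, so the core combinatorial task is to arrange for the dual graph $\Gamma(Y'_{\bar{s}})$ to contain a cycle.

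I would split the argument into two cases according to whether $\Gamma(Y_{\bar{s}})$ is a tree. When $\Gamma(Y_{\bar{s}})$ already contains a cycle, the Mayer--Vietoris-type decomposition
\[
H^1(Y_{\bar{s}}, \mathbb{Z}/\ell) \;\cong\; H^1(\Gamma(Y_{\bar{s}}), \mathbb{Z}/\ell) \;\oplus\; \bigoplus_v H^1(C_v, \mathbb{Z}/\ell)
\]
allows me to pick a nonzero class in the graph-cohomology summand; the associated Galois $\mathbb{Z}/\ell$-cover is \'etale at every node and trivial on every irreducible component, and its dual graph is the connected degree-$\ell$ topological cover of $\Gamma(Y_{\bar{s}})$ determined by the chosen class, whose first Betti number $\ell(b_1(\Gamma(Y_{\bar{s}})) - 1) + 1$ is at least $1$. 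When $\Gamma(Y_{\bar{s}})$ is a tree, we have $\sum_v g(C_v) = g(Y_{\bar\eta})$, and the genus hypothesis forces at least two distinct components $C_{v_1}, C_{v_2}$ to have positive genus. Choosing a class $c \in H^1(Y_{\bar{s}}, \mathbb{Z}/\ell)$ with nonzero projections to both $H^1(C_{v_1}, \mathbb{Z}/\ell)$ and $H^1(C_{v_2}, \mathbb{Z}/\ell)$ then yields a cover \'etale on all of $Y_{\bar{s}}$: a single vertex above each of $v_1, v_2$ and $\ell$ copies above every other vertex on the unique $v_1$-to-$v_2$ path in $\Gamma(Y_{\bar{s}})$, together with $\ell$ edges above every node on this path. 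A direct Euler-characteristic count along the path gives $b_1(\Gamma(Y'_{\bar{s}})) = \ell - 1 \geq 1$.

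The main obstacle is the degenerate subcase of the tree case in which the entire arithmetic genus of $Y_{\bar\eta}$ is concentrated on a single irreducible component, with all other components being $\mathbb{P}^1$'s. Every $\ell$-cover of such a $Y_{\bar{s}}$ is then forced to be \'etale and supported on that single positive-genus component, producing only a star-shaped (hence tree) dual graph. To rule out or circumvent this subcase, I would use the stability condition at the $\mathbb{P}^1$-leaves of $\Gamma(Y_{\bar{s}})$ together with the geometric context in which the lemma is applied---or, if necessary, first pass to an auxiliary finite \'etale cover to redistribute the positive genus among several components, which brings us back to the nondegenerate situation of the preceding paragraph.
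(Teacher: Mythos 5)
Your Case 1 and the nondegenerate part of Case 2 are fine (and the Euler-characteristic counts check out), but the ``degenerate subcase'' you flag at the end is a genuine gap, not a corner case: it is exactly the situation the lemma is used for. In the application (Proposition~\ref{s5:prop:family_of_curves}) the special fibre $\overline{Z}_x$ over a cusp is $\overline{X}$ glued to a $\mathbb{P}^1$, so typically only one component has positive genus and the dual graph is a tree with one edge. Your framework --- covers classified by $H^1(Y_{\bar{s}},\Z/\ell)$, i.e.\ \'etale covers of the nodal curve itself --- can never escape this situation, because such a cover is forced to be trivial on every genus-zero component, and the resulting dual graph is a star (still a tree). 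Neither of your proposed escapes works as stated: the geometric context does not rule the subcase out, and ``redistributing the genus by an auxiliary \'etale cover'' cannot be done with covers of the type you allow.

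The missing idea is to enlarge the class of covers to tame \emph{admissible} covers, i.e.\ to use the graph-of-groups description of $\pi_1^{adm}(Y_{\bar{s}})$ rather than $H^1$ of the nodal curve. The vertex groups are the tame fundamental groups of the marked normalized components; by stability a rational component carries at least three special points, so its vertex group is free prime-to-$p$ of rank $\ge 2$ and admits nontrivial $\Z/\ell$-quotients (covers ramified at the marked points and nodes, which is permitted since $Y_{\bar\eta}$ is the open curve). This is how the paper proceeds: first pass to an admissible Galois cover of $\ell$-power degree making every component sturdy (normalization of genus $\ge 2$), and only then take a $\Z/\ell$-cover connected over every component, which puts $\ell$ parallel edges over any edge joining two adjacent vertices. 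Your two nondegenerate cases then become unnecessary, since after sturdification a single uniform construction suffices. If you add the sturdification step (with the justification via the vertex groups of the graph of groups), your argument closes up.
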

\begin{proof}
From the combinatorial description of the admissible fundamental group of $Y_{\bar{s}}$ we see that after passing to an appropriate admissible Galois cover of $Y_{\bar{s}}$ whose degree is a power of $\ell$ we may assume that the special fibre $Y_{\bar{s}}$ is sturdy, i.e., the normalization of every irreducible component of $Y_{\bar{s}}$ has genus at least two.
Let $v_1$ and $v_2$ be two distinct vertices of $\Gamma(Y_{\bar{s}})$ connected by an edge.
Then, after passing to an \'etale cover of $Y_{\bar{s}}$ with Galois group $\Z/\ell \Z$ which is connected over every irreducible component of $Y_{\bar{s}}$ we see that there will be at least $\ell$ edges connecting $v_1$ and $v_2$; in particular the graph $\Gamma(Y_{\bar{s}})$ will not be a tree.  
\end{proof}

As an application of the above lemma we can show that there are families of curves over hyperbolic curves whose degenerations at infinity are stable and with infinite monodromy; for our applications we need only the case of characteristic zero.

\begin{proposition}\label{s5:prop:family_of_curves}
Let $X$ be a hyperbolic curve over an algebraically closed field.
Then there exists a finite \'etale morphism $X'\to X$ and a family of smooth proper connected curves $Y\to X'$ of genus $\ge 2$ such that
\begin{enumerate}[(i)]
\item the family $Y\to X'$ extends to a family $\overline{Y}\to \overline{X'}$ of stable curves,
\item for every cusp $x'$ of $X'$ the dual graph of the fibre $\overline{Y}_{x'}$ is not a tree. 
\end{enumerate}
\end{proposition}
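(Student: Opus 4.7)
The plan is to construct the family $Y \to X'$ in two stages: first build an initial family $Y_0 \to X'$ whose stable compactification $\overline{Y_0} \to \overline{X'}$ degenerates at every cusp of $X'$ to a stable curve with at least two irreducible components, and then use Lemma~\ref{s5:lem:nontree_cover} to refine this family so that the dual graph of every cuspidal fibre contains a cycle.

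For the initial family, I would proceed as follows. After replacing $X$ by a sufficiently large finite \'etale cover I may assume $\overline{X'}$ admits a non-constant morphism $f \colon \overline{X'} \to \mathbb{P}^1$ whose preimage of a chosen finite set $T \subset \mathbb{P}^1$ (with $|T| \ge 2$) is contained in the cusp locus $\overline{X'} \setminus X'$. Pulling back a family of hyperelliptic stable curves of some fixed genus $g \ge 2$ whose branch divisor moves through $T$ -- concretely, the family of double covers of $\mathbb{P}^1_x$ branched at $\{f(t), a_1, \ldots, a_{2g+1}\}$ for fixed $a_i \in \mathbb{P}^1$ -- and passing to a further \'etale cover of $X'$ to resolve any monodromy obstruction, produces a smooth proper family $Y_0 \to X'$ of connected genus-$g$ curves whose stable compactification $\overline{Y_0} \to \overline{X'}$ has nonsmooth fibre at every cusp of $X'$, with at least two irreducible components arising from the collision of branch points.

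For the refinement step, fix a cusp $x' \in \overline{X'}\setminus X'$. The localization of $\overline{Y_0} \to \overline{X'}$ at $x'$ is a pointed stable curve over the strictly henselian trait $\Spec \mathcal{O}^{sh}_{\overline{X'}, x'}$ satisfying the hypothesis of Lemma~\ref{s5:lem:nontree_cover}. The lemma yields a Galois \'etale cover of the geometric generic fibre, of $\ell$-power degree, whose admissible reduction at $x'$ has non-tree dual graph; via the specialization map this corresponds to an open normal subgroup $N_{x'} \subset \pi^{et}_1(Y_{0, \bar\eta})$, where $\bar\eta$ is a geometric generic point of $X'$. Let $N$ be the intersection of the $N_{x'}$ over the finitely many cusps, further intersected with its (finitely many) $\pi^{et}_1(X')$-conjugates. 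By the homotopy exact sequence $\pi^{et}_1(Y_{0, \bar\eta}) \to \pi^{et}_1(Y_0) \to \pi^{et}_1(X') \to 1$, the subgroup $N$ determines a finite \'etale Galois cover $Y \to Y_0$ of total spaces. Passing to a connected component and using Riemann--Hurwitz, $Y \to X'$ is a smooth proper family of connected curves of genus $\ge 2$.

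Finally, I would verify that the stable compactification $\overline{Y} \to \overline{X'}$ exists and has non-tree dual graph at every cusp. Existence is immediate since the extension at each cusp is prescribed by the admissible cover produced by Lemma~\ref{s5:lem:nontree_cover} (further refined by the Galois descent). Non-triviality of the dual graph follows from the general fact that admissible covers of stable curves preserve cycles, so $\Gamma(\overline{Y}_{x'})$ contains the cycle present in the local admissible cover of Lemma~\ref{s5:lem:nontree_cover}. The main obstacle is the initial family construction: engineering $f$ and the branch configuration so that \emph{every} cusp of $X'$ supports a nonsmooth stable degeneration; once this is in place, the remainder of the argument -- local-to-global passage via Lemma~\ref{s5:lem:nontree_cover} and the homotopy exact sequence -- is essentially formal.
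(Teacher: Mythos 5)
Your two--stage strategy (first an initial family degenerating at every cusp, then a refinement via Lemma~\ref{s5:lem:nontree_cover} and the homotopy exact sequence) is exactly the shape of the paper's argument, and your second stage is essentially correct modulo the usual caveat that extending a monodromy--stable open subgroup of $\pi^{et}_1(Y_{0,\bar\eta})$ to an open subgroup of $\pi^{et}_1(Y_0)$ cutting out the right cover requires passing to a further finite cover of the base (this is what the paper's citation of Stix, Prop.~2.7 handles); since further covers $X'\to X$ are allowed, that is harmless.

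The genuine gap is in your first stage, and it is the one you yourself flag. Your hyperelliptic family $y^2=(x-f(t))(x-a_1)\cdots(x-a_{2g+1})$ is smooth over $X'$ only if $f^{-1}(\{a_1,\ldots,a_{2g+1}\})\cap X'=\emptyset$, while degenerating at every cusp forces every cusp to lie in $f^{-1}(\{a_1,\ldots,a_{2g+1}\})$; hence you need $f^{-1}(\{a_1,\ldots,a_{2g+1}\})$ to coincide exactly with the cusp locus. Since $f$ is surjective, each $f^{-1}(a_i)$ is nonempty, so $X'$ must have at least $2g+1$ cusps distributed among prescribed fibres of $f$ --- a very restrictive Belyi--type condition that you do not show how to achieve (and which is simply impossible for, say, a curve with one cusp without first replacing it by a cover, which reintroduces the same problem). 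A secondary issue: the collision of \emph{two} branch points of a hyperelliptic family produces an \emph{irreducible} one--nodal stable limit, so the hypothesis of Lemma~\ref{s5:lem:nontree_cover} (at least two irreducible components) would not even be met as stated. The paper avoids all of this with one move: take the initial family to be the second configuration space $Z=X\times X\setminus\Delta\to X$, whose fibre over $x$ is $X\setminus\{x\}$. This extends to a family of pointed stable curves over $\overline{X}$ whose fibre over each cusp automatically acquires two irreducible components (the marked point colliding with a cusp bubbles off a component), after which Lemma~\ref{s5:lem:nontree_cover} applies at every cusp and one contracts the marked points at the end to get a proper family. Replacing your initial family by this configuration--space family would repair the argument.
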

\begin{proof}
Consider the second configuration space of $X$
\[
Z = X\times X\setminus \Delta,
\] 
here $\Delta$ is the diagonal divisor.
The first projection $Z\to X$ makes it into a family of smooth curves over $X$ where the fibre over a closed point $x\in X$ is the hyperbolic curve $X\setminus \{x\}$.
Moreover, the family $Z\to X$ extends to a family of pointed stable curves $\overline{Z}\to \overline{X}$ and for every cusp $x\in X^{csp}$ the fibre $\overline{Z}_{x}$ has two irreducible components.
Write $\eta$ for the generic point of $X$ and let $x$ be a cusp of $X$. 
Let $R = \Oc_x$ be the local ring of $\overline{X}$ at $x$ and denote $S = \Spec R$.
Consider a pointed stable curve over $S$ obtained from the cartesian square
\[
\begin{tikzcd}
\overline{Z}_S \arrow[r]\arrow[d] & \overline{Z} \arrow[d] \\
S\arrow[r] & \overline{X}.
\end{tikzcd}
\]
Applying Lemma~\ref{s5:lem:nontree_cover} to $\overline{Z}_S\to S$ we find a normal open subgroup $U_x\subset\pi^{et}_1(Z_{\bar{\eta}})$, whose index is a power of $\ell$, corresponding to a pointed stable curve over $S$ with a finite map onto $\overline{Z}_S$ whose dual graph of the special fibre is not a tree.
Let $U$ be a normal open subgroup of $\pi^{et}_1(Z_{\bar{\eta}})$ obtained as the intersection
\[
U = \bigcap U_x
\]
over all cusps $x \in X^{csp}$; its index in $\pi^{et}_1(Z_{\bar{\eta}})$ is also a power of $\ell$.

Then it follows from~\cite[Prop 2.7]{stix_2005} that there exists a normal open subgroup $V\subset\pi^{et}_1(Z)$ whose preimage under the map
\[
\pi^{et}_1(Z_{\bar{\eta}})\to \pi^{et}_1(Z)
\]
coincides with $U$.
Write $Z''\to  Z$ for the \'etale cover of $Z$ determined by $V$ and $X'\to X$ for the \'etale cover of $X$ determined by the image of $V$ through the surjection $\pi^{et}_1(Z) \twoheadrightarrow \pi^{et}_1(X)$.
Thus we have a diagram with a cartesian square
\[
\begin{tikzcd}
Z'' \arrow[r] & Z' \arrow[r]\arrow[d] & Z \arrow[d] \\
              & X' \arrow[r]          & X,
\end{tikzcd}
\]
where the morphism $Z''\to Z'$ is a Galois \'etale cover whose degree is a power of $\ell$.

%


It follows from the discussion preceding Lemma~\ref{s5:lem:nontree_cover} that the family $Z''\to X'$ is a smooth pointed stable curve of genus $\ge 2$, write $Y\to X'$ for its contraction.
By the choice of the subgroup $U$ we see that the family $Y\to X'$ of smooth proper curves extends to a family $\overline{Y}\to \overline{X'}$ of stable curves of genus $\ge 2$ over the compactification $\overline{X'}$ and for every cusp $x'$ of $X'$ the fibre $\overline{Y}_{x'}$ is a stable curve whose dual graph is not a tree.
\end{proof}

We now come back to the situation preceding Lemma~\ref{s5:lem:nontree_cover}.
Consider the $\ell$-adic cohomology of the generic fibre. Denote 
\[
V = H^1_{et}(Y_{\bar{\eta}}, \Qell),
\]
then we have an induced Galois representation
\[
\rho \colon G_K\to \GL(V).
\]

\begin{lemma}\label{s5:lem:eigenvalue_of_nontree}
Suppose that $R$ is henselian and that the dual graph of the special fibre $Y_{\bar{s}}$ is not a tree. Then there exists a nonzero vector subspace
\[
W\subset V 
\] 
on which $G_K$ acts through a finite quotient. In particular, for any element $g\in G_K$ the linear operator $\rho(g)$ has an eigenvalue which is a root of unity.
\end{lemma}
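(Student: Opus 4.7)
The plan is to exhibit a nonzero $G_K$-stable subspace $W \subset V$ on which the Galois action factors through a finite quotient; the ``in particular'' clause follows at once, since for any $g \in G_K$ the operator $\rho(g)|_W$ has finite order, hence all its eigenvalues on $W$ are roots of unity, and they occur among the eigenvalues of $\rho(g)$ on $V$.

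I will construct $W$ from the degeneration of the Jacobian. Let $J = \operatorname{Pic}^0(Y_\eta)$, an abelian variety over $K$. Because $Y/S$ is a stable curve with smooth generic fiber, $J$ has semistable reduction: its N\'eron model $\mathcal{J}/S$ has identity component $\mathcal{J}^0$ whose special fiber is a semiabelian variety
\[
0 \to T \to \mathcal{J}^0_{\bar{s}} \to A \to 0,
\]
with $T$ a torus over $\bar{\kappa}$ of dimension equal to the first Betti number of the dual graph $\Gamma(Y_{\bar{s}})$ (see e.g.\ Bosch--L\"utkebohmert--Raynaud, \emph{N\'eron Models}, Ch.~9). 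Since $\Gamma(Y_{\bar{s}})$ is by hypothesis not a tree, this Betti number is positive and $T$ is nontrivial.

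By Kummer theory one has a canonical $G_K$-equivariant isomorphism $V \cong V_\ell(J)(-1)$, where $V_\ell(J)$ denotes the rational Tate module of $J$ at $\ell$. Grothendieck's theory of semistable reduction (SGA 7) provides a $G_K$-stable inclusion $V_\ell(T) \hookrightarrow V_\ell(J)^{I_K} \subset V_\ell(J)$. Twisting by $\Qell(-1)$ yields a nonzero $G_K$-stable subspace
\[
W := V_\ell(T)(-1) \;\cong\; X_*(T) \otimes_{\mathbb{Z}} \Qell \;\subset\; V,
\]
where $X_*(T)$ is the cocharacter lattice of $T_{\bar{\kappa}}$: the cyclotomic twists cancel, leaving only the action of $G_K$ on $X_*(T)$, which factors through the Galois group of a splitting field of the torus $T_\kappa/\kappa$ and hence through a finite quotient.

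The principal nontrivial inputs are Grothendieck's semistable reduction theorem for Jacobians of stable curves together with the identification of the toric rank of the N\'eron model with $b_1(\Gamma)$. A more combinatorial alternative would use the Mayer--Vietoris exact sequence
\[
0 \to H^1(\Gamma(Y_{\bar{s}}), \Qell) \to H^1_{et}(Y_{\bar{s}}, \Qell) \to \bigoplus_i H^1_{et}(\tilde{C}_i, \Qell) \to 0
\]
for the normalization of $Y_{\bar{s}}$, combined with the injectivity of the specialization map $H^1_{et}(Y_{\bar{s}}, \Qell) \hookrightarrow V$ coming from the nearby cycles spectral sequence; the finiteness of the Galois action on $H^1(\Gamma)$ is then manifest, since $G_\kappa$ acts on the finite graph $\Gamma(Y_{\bar{s}})$ through $\operatorname{Aut}(\Gamma)$.
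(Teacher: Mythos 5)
Your argument is correct and is exactly the content of the paper's proof, which simply cites \cite[\S 9.7]{BLR_neron_models} and \cite[Exp IX]{sga7_I}: the semiabelian reduction of the Jacobian, the identification of the toric rank with $b_1(\Gamma(Y_{\bar{s}}))>0$, and the cancellation of cyclotomic twists on the toric part $V_\ell(T)(-1)\cong X_*(T)\otimes\Qell$, on which the Galois action is visibly finite. Both your main route and the weight-zero/graph-cohomology alternative are faithful unpackings of those references, so nothing further is needed.
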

\begin{proof}
This follows from~\cite[\S 9.7]{BLR_neron_models} and the theory of~\cite[Exp IX]{sga7_I}.
\end{proof}

After this preliminary discussion we return to the situation considered in previous sections, namely $X$ is a hyperbolic curve over a number field $K$ and we consider sections of the homotopy short exact sequence 
\begin{equation}\label{s5:ses:etale}
1\to \Delta_X\to \Pi_X\to G_K\to 1.
\end{equation}
In Corollary~\ref{s3:cor:cusp_red} we proved that a Selmer section of the sequence~\eqref{s5:ses:etale} which reduces to cusps on a set of density one must be cuspidal. 
We can now strengthen this result by relaxing the density assumption. 
\begin{theorem}\label{s5:thm:cusp_strict_reduction}
Let $s$ be a Selmer section of the sequence~\eqref{s5:ses:etale} and $\Omega\subset \V(K)$ be a set of valuations with strongly positive density.
Suppose that the section $s$ reduces to cusps on $\Omega$. Then $s$ is a cuspidal section. 
\end{theorem}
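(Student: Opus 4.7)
The plan is to use Proposition~\ref{s4:prop:eigenvalues_on_strict} to upgrade the hypothesis from strongly positive density to density one, and then invoke Corollary~\ref{s3:cor:cusp_red}. After passing to a neighbourhood of $s$ and extending $K$, I may assume that $X$ has positive genus, all cusps of $\overline{X}$ are $K$-rational, and (by Proposition~\ref{s5:prop:family_of_curves}) that there is a family $Y\to X$ of smooth proper curves of genus $\ge 2$ whose extension $\overline{Y}\to\overline{X}$ is a family of stable curves with non-tree dual graph at every cusp. Fix a prime $\ell$ and set $V = H^1_{et}(Y_{\bar{\eta}},\Qell)$; composing the monodromy representation $\rho\colon\Pi_X\to\GL(V)$ with $s$ yields an $\ell$-adic representation $\rho_s\colon G_K\to\GL(V)$.

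The central step is to show that for almost every $v\in\Omega$ and every Frobenius lift $F_v\in G_v$, the operator $\rho_s(F_v)$ admits a root of unity eigenvalue. By the Selmer hypothesis $s_v$ is either geometric or cuspidal. In the geometric case, $s_v$ corresponds to $P_v\in X(K_v)$ whose $\Oc_v$-extension sends the closed point to the cuspidal divisor of $\overline{\mathcal{X}}_{\kappa(v)}$; pulling back $\overline{\mathcal{Y}}\to\overline{\mathcal{X}}$ along this $\Oc_v$-point produces a stable curve over $\Oc_v$ whose special fibre has non-tree dual graph and whose generic fibre cohomology is canonically $V$ as a $G_v$-module, so Lemma~\ref{s5:lem:eigenvalue_of_nontree} supplies the eigenvalue. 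In the cuspidal case, $s_v$ factors through the decomposition group $D_x\cong G_{\widehat{K}_x}$ of a cusp $x\in X^{csp}(K_v)$; pulling back $\overline{Y}$ along $\Spec\widehat{\Oc}_{\overline{X},x}$ gives a stable curve over this complete DVR with smooth generic fibre and non-tree special fibre, and Lemma~\ref{s5:lem:eigenvalue_of_nontree} again applies, giving a root of unity eigenvalue for every element of $D_x$ and in particular for $\rho_s(F_v)$.

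Proposition~\ref{s4:prop:eigenvalues_on_strict} then yields a finite extension $L/K$ such that every $g\in G_L$ acts on $V$ through $\rho_s$ with a root of unity eigenvalue. The base change $s_L\colon G_L\to\Pi_{X_L}$ is a Selmer section of $X_L$, and $\rho_{s_L}(F_w)$ has a root of unity eigenvalue for every $w\in\V(L)$ and every Frobenius lift. To convert this into a density one reduction statement, I argue the converse at places of good reduction: if $w\in\V(L)$ is such that $s_{L,w}$ is geometric at $P_w\in X(L_w)$ whose reduction lies in $X$ (off the cuspidal divisor), then $Y_{P_w}$ has good reduction at $w$ and by the Weil conjectures every eigenvalue of $\rho_{s_L}(F_w)$ has complex absolute value $|\kappa(w)|^{1/2}>1$, so none is a root of unity. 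Contrapositively, for almost every $w\in\V(L)$ the reduction $\bar{s}_{L,w}$ lies on the cuspidal divisor; equivalently, $s_L$ reduces to cusps on a set of density one in $\V(L)$.

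Corollary~\ref{s3:cor:cusp_red} applied to $s_L$ then gives that $s_L$ is cuspidal. Comparing local sections at any pair $w\mid v$ forces $s_v$ to be cuspidal for every $v\in\V(K)$, so Theorem~\ref{s2:thm:loc_cusp} concludes that $s$ itself is cuspidal. The main obstacle I anticipate is the careful identification, in both subcases of the key step, of the local Galois representation cut out by $\rho_s$ with the natural Galois action on the $\ell$-adic cohomology of the generic fibre of an explicit stable curve over a complete discrete valuation ring, so that Lemma~\ref{s5:lem:eigenvalue_of_nontree} can be invoked exactly as stated.
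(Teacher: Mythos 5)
Your proof follows the paper's argument essentially step for step: the same family of stable curves from Proposition~\ref{s5:prop:family_of_curves}, the same application of Lemma~\ref{s5:lem:eigenvalue_of_nontree} at the places of $\Omega$ (split into the geometric and cuspidal subcases exactly as in the paper), Proposition~\ref{s4:prop:eigenvalues_on_strict} to pass to an open subgroup, the Weil-bound converse at places of good reduction, and Corollary~\ref{s3:cor:cusp_red} to conclude. The only point treated more casually than in the paper is the initial passage to the finite \'etale cover furnished by Proposition~\ref{s5:prop:family_of_curves} (which need not be a neighbourhood of $s$): one must lift $s$ after a finite base extension and check that the ``reduces to cusps on a set of strongly positive density'' hypothesis persists for the lift --- this is precisely where the strong positivity of $\Omega$ is used, but the verification is routine.
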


Before giving a proof of Theorem~\ref{s5:thm:cusp_strict_reduction} we need to recall the construction of a representation of $G_K$ from a family of smooth proper varieties over $X$ and a section of the sequence~\eqref{s5:ses:etale}.

Let $Y\to X$ be a family of smooth, proper curves over $X$. After choosing a prime number $\ell$ and a geometric point $\bar{x}\to X$ we have the induced representation
\[
\rho_{\bar{x}}\colon\pi^{et}_1(X,\bar{x}) \to \GL(H^1_{et}(Y_{\bar{x}},\Q_{\ell})).
\]
Different choices of a basepoint $\bar{x}$ will lead to isomorphic representations and in our applications we will be only interested in the isomorphism class of a representation.
Thus we will skip the geometric basepoint from the notation and write simply
\[
\rho \colon \Pi_X\to \GL(V),
\]
with $V$ a finite dimensional vector space over $\Q_{\ell}$.
Let $s$ be a section of the sequence~\eqref{s5:ses:etale}, by pulling back the representation $\rho$ along the map $s\colon G_K \hookrightarrow \Pi_X$ we obtain a representation
\[
\rho^s\colon G_K\to \GL(V).
\]
Let $v\in \V(K)$ be a valuation, we denote by
\[
\rho^s_v\colon G_v \to \GL(V),
\]  
the restriction of $\rho^s$ to a decomposition group $G_v\subset G_K$.
Note that when $s$ is geometric at $v$ then the representation $\rho^s_v$ is isomorphic to the natural action of $G_v$ on the vector space $H^1_{et}(Y_{\bar{x}},\Q_{\ell})$, where $\bar{x}$ is a geometric point over $s_v\in X(K_v)$.
Suppose now that $s$ is cuspidal at $v$, with support at a cusp $x\in X^{csp}(K_v)$.
Using the notation from Section~\ref{s:loc_cusp} we write $\eta = \Spec K_x$ and let $\bar{\eta}$ be a geometric point over $\eta$.
From the cartesian diagram
\[
\begin{tikzcd}
Y_{\bar{\eta}}\arrow[d]\arrow[r] & Y\arrow[d]\\
\bar{\eta}\arrow[r] & X
\end{tikzcd}
\]
we obtain an action
\begin{equation}\label{s5:eq:cuspidal_action}
D_x \cong \pi_1^{et}(\eta,\bar{\eta})\to \GL(H^1_{et}(Y_{\bar{\eta}}, \Q_{\ell}))
\end{equation}
which is isomorphic to the restriction of the representation $\rho$ to a decomposition subgroup $D_x\subset \Pi_X$.
Therefore, the representation $\rho^s_v$ is isomorphic to the pullback of the representation~\eqref{s5:eq:cuspidal_action} along a homomorphism $G_v\hookrightarrow D_x$ splitting the map $D_x\to G_K$ over $G_v$. 

\begin{proof}[Proof of Theorem~\ref{s5:thm:cusp_strict_reduction}]
Let $s$ be a Selmer section which reduces to cusps on a set of valuations $\Omega$ having strongly positive density. 
We are going to show that $s$ reduces to cusps on $\V(K)$, this will finish the proof thanks to Corollary~\ref{s3:cor:cusp_red}.
Note that we are free to replace the base field $K$ by a finite extension, in particular we may always assume that the cusps of $X$ are $K$-rational.

First, let $X'\to X$ be a finite \'etale cover corresponding to an open subgroup $\Pi_{X'}\subset \Pi_X$. After extending the base field $K$, the section $s$ splits into $n$ sections $s_1,\ldots, s_n$ of the sequence
\[
1\to \Delta_{X'}\to \Pi_{X'}\to G_K\to 1.
\] 
For every $1\le i\le n $ denote by $\Omega_i$ the set of valuations $v\in \V(K)$ such that $s_{i,v}$ is cuspidal.
Since $\Omega \subset \bigcup_i \Omega_i$, there exists an index $1\le i\le n$ such that $\Omega_i$ has strongly positive density.
Then it is enough to show that the section $s_i$ is cuspidal as this will imply the same for $s$. Hence to prove the theorem we are free to replace the curve $X$ by its finite \'etale cover.
Therefore, after applying Proposition~\ref{s5:prop:family_of_curves} and extending the base field $K$, we may assume that there exists a family of smooth proper curves $Y\to X$ over $K$ which extends to a family of stable curves $\overline{Y}\to \overline{X}$ such that for every cusp $x\in X^{csp}(K)$ the dual graph of the stable curve $Y_{\bar{x}}$ is not a tree.

Pick a prime number $\ell$ and consider the representation $\rho\colon\Pi_X\to \GL(V)$ associated to this family and its pullback
\[
\rho^s \colon G_K\to  \GL(V)
\]
via the section $s$. We claim that for all but finitely many valuations $v\in\Omega$ the linear operator $\rho^s(F_v)$ has an eigenvalue which is a root of unity.
This is clear if $s$ is cuspidal at $v$ with support at a cusp $x$, by applying Lemma~\ref{s5:lem:eigenvalue_of_nontree} to the stable curve over $S$ obtained from the cartesian diagram
\[
\begin{tikzcd}
\overline{Y}_S \arrow[r]\arrow[d] & \overline{Y} \arrow[d] \\
S = \Spec \widehat{\Oc_x}\arrow[r] & \overline{X}
\end{tikzcd}
\]
and using the previous description of the local representation $\rho^s_v$.
Hence we may concentrate on valuations $v$ at which $s$ is geometric.
By a spreading out argument we see that for almost all $v\in \Omega$ for which $s_v$ is geometric the smooth proper curve $Y_{s_v}$ over $K_v$ obtained from the cartesian diagram 
\[
\begin{tikzcd}
Y_{s_v} \arrow[r]\arrow[d] &  Y\arrow[d] \\
\Spec K_v\arrow[r, "s_v"] & X
\end{tikzcd}
\]
extends to a stable curve over $S = \Spec \Oc_v$ whose dual graph of the special fibre is not a tree.   
Hence we conclude again by Lemma~\ref{s5:lem:eigenvalue_of_nontree}.

Thus we may apply Proposition~\ref{s4:prop:eigenvalues_on_strict} to the representation $\rho^s$, therefore after extending the base field $K$ further by a finite extension we may assume that for every $g\in G_K$ the linear operator $\rho^s(g)$ has an eigenvalue which is a root of unity.
Let $v\in \V(K)$ be a valuation such that $s_v$ is geometric at $v$ with support at $x\in X(K_v)$.
Since the linear operator $\rho^s_v(F_v)$ has an eigenvalue which is a root of unity, it follows from the Riemann hypothesis for abelian varieties that the smooth proper curve $Y_{x}$ does not have good reduction.
Then again by a spreading out argument we conclude that the section $s$ reduces to cusps on $\V(K)$. 
\end{proof}
\begin{corollary}\label{s5:cor:noncusp_selmer}
Let $s$ be a noncuspidal Selmer section of the sequence~\eqref{s5:ses:etale}.
Then there exists a finite field extension $L$ of $K$ such that the restriction of $s$ to $G_L$ is an almost locally geometric section
\end{corollary}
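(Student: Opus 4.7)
The plan is to deduce this corollary directly from Theorem~\ref{s5:thm:cusp_strict_reduction} by a contrapositive argument; essentially no extra ideas are needed once one observes how the Selmer dichotomy behaves under base change. Concretely, if $s$ is cuspidal at $v\in\V(K)$ with support at a cusp $x$, then the image of $s_v$ lies in a decomposition subgroup $D_x\subset\Pi_{X_v}$, so for any $w\in\V(L)$ over $v$ the restriction $(s|_{G_L})_w$ also has image in $D_x$ and hence is cuspidal at $x$; conversely a geometric local section restricts to a geometric local section (with the same support viewed in $X(L_w)$), and cuspidal and geometric local sections are mutually exclusive possibilities. Consequently $s|_{G_L}$ is again a Selmer section for $X_L$, and the set of $w\in\V(L)$ at which it is cuspidal is exactly $res^{-1}_{L/K}(\Omega_{\mathrm{cusp}})$, where $\Omega_{\mathrm{cusp}}\subset\V(K)$ denotes the set of $v$ at which $s$ is cuspidal.

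Next I would invoke Theorem~\ref{s5:thm:cusp_strict_reduction}. Let $\Omega_s\subset\V(K)$ denote the set of valuations at which $\bar{s}_v$ coincides with the reduction of a cusp of $X$; since cuspidality of $s_v$ forces $\bar{s}_v$ to be the reduction of a cusp, one has $\Omega_{\mathrm{cusp}}\subset\Omega_s$. Because $s$ is noncuspidal, Theorem~\ref{s5:thm:cusp_strict_reduction} rules out $\Omega_s$ having strongly positive density, and by the very definition of that notion there exists a finite extension $L/K$ for which $res^{-1}_{L/K}(\Omega_s)$ has upper density zero. A fortiori $res^{-1}_{L/K}(\Omega_{\mathrm{cusp}})$ has density zero in $\V(L)$, so by the first paragraph the complementary set of $w\in\V(L)$ on which $(s|_{G_L})_w$ is geometric has density one; this is the desired almost local geometricity.

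There is no substantive obstacle in this argument: all of the analytic input (the $\ell$-adic eigenvalue argument of Proposition~\ref{s4:prop:eigenvalues_on_strict}, the family-of-curves construction of Proposition~\ref{s5:prop:family_of_curves}, and the reduction-theoretic Lemma~\ref{s5:lem:eigenvalue_of_nontree}) has already been absorbed into Theorem~\ref{s5:thm:cusp_strict_reduction}. The only point that one might wish to verify carefully is the base-change behaviour of cuspidality recorded in the first paragraph, but this is essentially formal.
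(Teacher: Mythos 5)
Your proof is correct and follows exactly the route the paper intends: the paper's own proof of this corollary is the single line ``This follows directly from Theorem~\ref{s5:thm:cusp_strict_reduction},'' and your argument just fills in the routine details (the contrapositive of the theorem applied to $\Omega_s$, the negation of ``strongly positive density'' producing the extension $L$, and the compatibility of the geometric/cuspidal dichotomy with restriction to $G_L$). Nothing is missing.
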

\begin{proof}
This follows directly from Theorem~\ref{s5:thm:cusp_strict_reduction}.
\end{proof}

\begin{remark}
Write $A$ for the generalized Jacobian of $\overline{X}$ with respect to the divisor of cusps $X^{csp}$, it is a semiabelian variety over $K$.
After possibly extending $K$ and choosing a base point we get a map $X \to A$.
Let $s$ be a section of the sequence~\eqref{s5:ses:etale}, the map $X\to A$ will associate to the section $s$ a cohomology class $c\in H^1(G_K, T(A))$.
Suppose that the section $s$ is a noncuspidal Selmer section, then it follows from Theorem~\ref{s5:thm:cusp_strict_reduction} that after restricting to an open subgroup of $G_K$ the class $c$ is a Selmer class, as defined in Definition~\ref{s3:def:selmer_class}.
\end{remark}

We may also extend the definition of a weak cover of a section to the case of noncuspidal Selmer sections.
Namely, if $s,s_1,\ldots, s_n$ are noncuspidal Selmer sections for some $n\ge 1$ then we say that sections $(s_i)_i$ \emph{weakly cover} the section $s$ if there exists a set of valuations $\Omega\subset \V(K)$ of density one such that for every $v\in \Omega$ there exists $1\le i\le n$ such that $\bar{s}_{i,v} = \bar{s}_v$.
This allows us to prove a version of Theorem~\ref{s3:thm:weak_cover_sections} in the case of noncuspidal Selmer section as follows.  

\begin{corollary}\label{s5:cor:noncusp_selmer_weak_cover}
Let $s,s_1,\ldots s_n$ be noncuspidal Selmer sections of the sequence~\eqref{s5:ses:etale} for some $n\ge 1$.
Suppose that sections $(s_i)_i$ weakly cover the section $s$. Then for some $1\le i\le n$ sections $s_i$ and $s$ are conjugate.
\end{corollary}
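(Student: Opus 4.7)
The plan is to reduce the statement to Theorem~\ref{s3:thm:weak_cover_sections}, which handles the almost locally geometric case, by using Corollary~\ref{s5:cor:noncusp_selmer} to pass to a finite extension over which all sections become almost locally geometric.

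Concretely, I would first apply Corollary~\ref{s5:cor:noncusp_selmer} separately to $s$ and each $s_i$, and then take a common finite Galois extension $L/K$ so that each of $s|_{G_L}, s_i|_{G_L}$ is an almost locally geometric section of the analogous homotopy sequence for $X_L = X \times_K L$. Next I would verify that the weak cover hypothesis descends to $L$. If $\Omega \subset \V(K)$ of density one realizes the cover over $K$, then $\mathrm{res}_{L/K}^{-1}(\Omega) \subset \V(L)$ has density one; intersecting with the density-one subset of $\V(L)$ where all the restricted sections are geometric gives a density-one set $\widetilde\Omega \subset \V(L)$. For $v \in \widetilde\Omega$ above $v_0 \in \Omega$, the natural inclusion of residue fields $\kappa(v_0) \hookrightarrow \kappa(v)$ induces an inclusion of reduction sets $\overline{\mathcal X}(\kappa(v_0)) \hookrightarrow \overline{\mathcal X}_L(\kappa(v))$ sending $\bar s_{v_0}$ to the reduction of $s|_{G_L}$ at $v$, so the covering equality $\bar s_{i,v_0} = \bar s_{v_0}$ persists over $L$. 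Hence $(s_i|_{G_L})_i$ weakly cover $s|_{G_L}$ as almost locally geometric sections, and Theorem~\ref{s3:thm:weak_cover_sections} applied over $L$ yields some index $i$ with $s_i|_{G_L}$ and $s|_{G_L}$ conjugate in $\Pi_{X_L}$.

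The final step is to descend this $L$-conjugacy to a $K$-conjugacy of $s$ and $s_i$. Given an $L$-conjugator $g \in \Delta_X$, for any $v \in \V(K)$ and any $v' \in \V(L)$ above, $g$ conjugates $s|_{G_{v'}}$ to $s_i|_{G_{v'}}$. When both $s$ and $s_i$ are geometric at $v$, injectivity of the Kummer map $X(L_{v'}) \hookrightarrow \Sec(\Pi_{X_{L_{v'}}}, G_{v'})$ combined with the inclusion $X(K_v) \hookrightarrow X(L_{v'})$ forces the $K_v$-supports of $s_v$ and $s_{i,v}$ to coincide, producing local $K_v$-conjugacy. Invoking Theorem~\ref{s5:thm:cusp_strict_reduction} (possibly after enlarging $L$) to control cuspidal behavior, $s$ and $s_i$ are locally conjugate on a density-one subset of $\V(K)$, and the local-to-global conjugacy theorem of~\cite{porowski2024} used in the proof of Lemma~\ref{s3:lem:compact_ab} then yields the desired $K$-conjugacy.

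The hard part is this final descent: Theorem~\ref{s3:thm:weak_cover_sections} directly yields only an $L$-conjugacy, hence only $L_{v'}$-local conjugacy at valuations $v' \in \V(L)$, so one must carefully combine Kummer injectivity at geometric valuations with Theorem~\ref{s5:thm:cusp_strict_reduction} to confine cuspidal valuations to a density-zero exceptional subset of $\V(K)$ before the local-to-global conjugacy result of~\cite{porowski2024} can be applied to conclude.
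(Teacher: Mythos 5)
Your reduction is the same as the paper's: apply Corollary~\ref{s5:cor:noncusp_selmer} to pass to a finite extension $L/K$ over which all the sections become almost locally geometric, check that the weak-cover hypothesis persists over $L$ (a detail the paper leaves implicit, and which you verify correctly), and invoke Theorem~\ref{s3:thm:weak_cover_sections}. Where you diverge is the final descent from an $L$-conjugacy to a $K$-conjugacy. The paper does not need your elaborate argument (local conjugacy at geometric places via Kummer injectivity, control of cuspidal places, then the local-to-global conjugacy theorem of~\cite{porowski2024}): for a hyperbolic curve the restriction map $\Sec(\Pi_X,G_K)\hookrightarrow\Sec(\Pi_{X_L},G_L)$ on conjugacy classes of sections is injective for any finite extension $L/K$ (this is stated in Section~\ref{s:fibres} and used throughout, e.g.\ in the definition of $\qSec$), so conjugacy of $s|_{G_L}$ and $s_i|_{G_L}$ immediately gives conjugacy of $s$ and $s_i$ over $K$. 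Your longer route does appear to go through --- the cuspidal locus of a noncuspidal Selmer section in $\V(K)$ has density zero because its preimage in $\V(L)$ lands inside the density-zero cuspidal locus over $L$, and geometric local sections with equal support are conjugate --- but it essentially re-proves a special case of the injectivity of restriction, and the appeal to Theorem~\ref{s5:thm:cusp_strict_reduction} there is really an appeal to Corollary~\ref{s5:cor:noncusp_selmer} again. So: correct, same strategy, with one step done the hard way.
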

\begin{proof}
Thanks to Corollary~\ref{s5:cor:noncusp_selmer} we may pass to a finite field extension of $K$ and assume that sections $s,s_1,\ldots,s_n$ are almost locally geometric.
Then the result follows from Theorem~\ref{s3:thm:weak_cover_sections}.
\end{proof}

\begin{remark}
In this remark we introduce another class of sections; this is not used in the rest of the paper.
Let $s$ be a locally geometric section of a hyperbolic curve $X$ over a number field $K$.
Fix a model $\overline{\mathcal{X}}$ of $\overline{X}$ over $\Oc_K$ and denote by $\Omega_s\subset \V(K)$ the subset of all valuations $v\in \V(K)$ such that $\bar{s}_v$ coincides with the reduction of a cusp of $X$.
Then the set $\Omega_s$, modulo a finite subset, is independent of the choice of the model $\overline{\mathcal{X}}$.
We say that the section $s$ is \emph{adelic} if the set $\Omega_s$ is finite.
From Theorem~\ref{s5:thm:cusp_strict_reduction} we see that the set $\Omega_s$ will have density zero after passing to a finite field extension; we may say that every locally geometric section becomes almost adelic after a finite field extension.

It is an important open problem to show that noncuspidal Selmer sections are locally geometric and adelic;
Theorem~\ref{s5:thm:cusp_strict_reduction} addresses this in a weak sense, modulo sets of density zero. 
Let us remark that a positive solution to this problem would imply in particular a positive solution to the birational version of the Grothendieck section conjecture.  
\end{remark}

\section{Fibres over sections}\label{s:fibres}
In the final section we consider fibres over almost locally geometric sections arising from a dominant morphism $X\to Y$ of curves and show that in most cases they are finite, with a bound on their size given by the degree of the morphism.

Let $A$ be a semiabelian variety over a number field $K$ and $\Omega\subset \V(K)$ be a subset of valuations.
Suppose that for every $v\in \Omega$ we are given a subset 
\[
C_v\subset A(K_v)
\]
of $K_v$-rational points of $A$. From this collection of subsets we define the set 
\[
C\subset H^1(G_K,T(A))
\]
of all cohomology classes $c$ such that for every $v\in \Omega$ the class $c$ is geometric at $v$ and moreover $c_v\in C_v$.

\begin{lemma}\label{s6:lem:list_ab}
Assume that $\Omega$ has density one. Let $n\ge 1$ be a natural number and suppose that for every $v\in \Omega$ the set $C_v$ is finite, of cardinality $\le n$. 
Then the set $C$ is finite, of cardinality $\le n$ 
\end{lemma}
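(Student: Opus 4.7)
The plan is to argue by contradiction using the pigeonhole principle together with Theorem~\ref{s3:thm:weak_cover}. Assume $|C|\ge n+1$ and choose pairwise distinct classes $c_0, c_1, \ldots, c_n \in C$. For each $v\in \Omega$ the $n+1$ localizations $c_{0,v}, \ldots, c_{n,v}$ all lie in $C_v$, a set of cardinality at most $n$, so by pigeonhole there exist indices $0\le i<j\le n$ (depending on $v$) with $c_{i,v}=c_{j,v}$; equivalently, the difference class $d_{ij}:=c_i-c_j\in H^1(G_K,T(A))$ satisfies $d_{ij,v}=0$ in $A(K_v)$. The collection $\{d_{ij}\}_{0\le i<j\le n}$ is a finite family of nonzero classes, and for every $v\in \Omega$ at least one $d_{ij}$ localizes to zero at $v$.

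Next, fix a semiabelian model $\mathcal{A}$ of $A$ and, after possibly restricting $\Omega$ by a density-zero subset, arrange that every $c_i$ is integral at every remaining $v\in \Omega$, so that each $c_i$, and hence each difference $d_{ij}$, is a Selmer class in the sense of Definition~\ref{s3:def:selmer_class}. Wherever $d_{ij,v}=0$, the reduction $\bar d_{ij,v}$ equals the reduction of the identity element, so the pigeonhole observation shows that the finite family $(d_{ij})$ weakly covers the zero class in the sense of Definition~\ref{s2:def:weak_cover_coh}. An application of Theorem~\ref{s3:thm:weak_cover} then yields some pair $(i,j)$ with $d_{ij}=0$, contradicting $c_i\ne c_j$; hence $|C|\le n$ as claimed.

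The principal technical point is the Selmer reduction, that is, ensuring that every class in $C$ can be taken to be integral at a density-one set of valuations. In the abelian variety case this is automatic by properness, since $\mathcal{A}(\Oc_v)=A(K_v)$. For a general semiabelian $A$ with nontrivial toric part, one exploits the uniform finite bound $|C_v|\le n$ together with a spreading-out argument to discard a density-zero set of valuations at which the admissible values in $C_v$ fail to lie in $\mathcal{A}(\Oc_v)$. This Selmer reduction is the step I expect to be the main obstacle; once it is in place, the pigeonhole-plus-weak-cover argument reduces the lemma to the already established Theorem~\ref{s3:thm:weak_cover}.
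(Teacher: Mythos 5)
Your pigeonhole step is exactly the one the paper uses: from $n+1$ distinct classes and $|C_v|\le n$ you get, for each $v\in\Omega$, a pair $i\ne j$ with $c_{i,v}=c_{j,v}$ in $A(K_v)$, hence (by injectivity of the Kummer map) $loc_v(c_i-c_j)=0$ in $H^1(G_v,T(A))$. But at this point you throw away this exact vanishing and keep only the weaker statement that the reductions $\bar d_{ij,v}$ agree with that of the identity, so as to invoke Theorem~\ref{s3:thm:weak_cover}. That theorem requires the classes $d_{ij}$ to be \emph{Selmer} classes in the sense of Definition~\ref{s3:def:selmer_class}, i.e.\ integral at a density-one set of valuations, and this is precisely what you cannot supply: the classes in $C$ are only assumed geometric at each $v\in\Omega$, the sets $C_v$ are arbitrary local data with no global object to spread out, and Remark~\ref{s3:rem} exhibits (for $\mathbb{G}_m$) a class that is geometric at every place yet integral at none --- so the failure of integrality can occur on a set of density one, not a density-zero set that you could discard. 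Your proposed ``Selmer reduction'' is therefore not a technical point to be filled in later; it is false as stated for semiabelian $A$ with nontrivial toric part. (For abelian $A$ it is automatic by properness, as you note, and there your argument does go through.)

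The paper avoids this entirely by using the stronger output of the pigeonhole: the family $(c_{ij})$ \emph{covers} the zero class in the sense of~\cite{porowski2024}, meaning that at every $v\in\Omega$ some $c_{ij}$ has vanishing localization in $H^1(G_v,T(A))$ (not merely vanishing reduction modulo $v$). The local-global principle \cite[Thm.~2.14]{porowski2024} then gives $c_{ij}=0$ for some pair without any integrality hypothesis. So the fix is simple: replace the appeal to Theorem~\ref{s3:thm:weak_cover} by an appeal to that earlier covering theorem, applied to the exact vanishing you already established.
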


\begin{proof}
Suppose that the conclusion does not hold, thus we can choose $n+1$ pairwise different classes
\[
c_0, c_1,\ldots, c_n
\]
belonging to $C$.
For $0\le i \neq j\le n$ let $c_{ij} = c_i - c_j$. By assumption, for every $v\in \Omega$ there exists $i\neq j$ such that the image of $c_{ij}$ under the restriction map
\[
H^1(G_K, T(A))\to H^1(G_v, T(A))
\] 
is trivial. Thus the classes $(c_{ij})_{ij}$ cover the zero class on $\Omega$, in the sense of~\cite{porowski2024}. Therefore by~\cite[Thm. 2.14]{porowski2024} it follows that $c_{ij}=0$ for some $i\ne j$ thus $c_i=c_j$, a contradiction.
\end{proof}

Let $X$ be a $\pi_1$-nontrivial curve over a number field $K$, recall that $\Sec(\Pi_X, G_K)$ denotes the set of conjugacy classes of sections of the homotopy exact sequence
\[
1\to \Delta_X \to \Pi_X \to G_K\to 1.
\]
For a finite field extension $L/K$ the restriction map
\[
\Sec(\Pi_X, G_K) \hookrightarrow \Sec(\Pi_{X_L}, G_L)
\]
is injective and we define the set of quasi-sections of $X$ as the colimit
\[
\qSec(X) = \varinjlim \Sec(\Pi_{X_L}, G_L),
\]
over all finite field extensions $L$ over $K$.
It is easy to extend the definition of various types of sections from Sections~\ref{s:loc_cusp} and~\ref{s:weak_covers} to the case of quasi-sections.
In particular we have the notions of (almost) locally geometric quasi-sections, as well as cuspidal and Selmer quasi-sections when $X$ is assumed to be hyperbolic.

Given a dominant morphism $f \colon X \to Y$ of $\pi_1$-nontrivial curves over $K$ we have an induced map
\begin{equation}\label{s6:eq:qsec_morph}
f \colon \qSec(X)\to \qSec(Y),
\end{equation}
denoted in the same way, defined by mapping a quasi-section of $X$ to its image via the homomorphism $\Pi_X\to \Pi_Y$.
This map is naturally compatible with the Kummer maps for $X$ and $Y$ as well as with localizing at valuations of $K$. Given a quasi-section $t\in \qSec(Y)$, the preimage $f^{-1}(t)$ of $t$ under the map~\eqref{s6:eq:qsec_morph} will be called the \emph{fibre} over the section $t$.
Finally, when $f$ is a finite \'etale morphism of degree $d\ge 1$ then for every quasi-section $t\in \qSec(Y)$ the fibre $f^{-1}(t)$ over $t$ is a finite set of cardinality $d$.

Now, we consider a situation analogous to the one considered in Lemma~\ref{s6:lem:list_ab} but in the case of curves.
Let $\Omega\subset \V(K)$ be a subset of valuations.
Suppose that for every $v\in \Omega$ we are given a subset 
\[
S_v\subset X(K_v)
\]
of $K_v$-rational points of $X$. From this collection of subsets we define the set 
\[
S\subset \Sec(\Pi_X,G_K)
\]
of all sections $s$ such that for every $v\in \Omega$ the section $s$ is geometric at $v$ and moreover $s_v\in S_v$.

\begin{lemma}\label{s6:lem:list}
Assume that $\Omega$ has density one and let $n\ge 1$ be a natural number. Suppose that for every $v\in \Omega$ the set $S_v$ is finite, of cardinality $\le n$. 
Then the set $S$ is finite, of cardinality $\le n$ 
\end{lemma}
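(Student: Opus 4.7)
Plan:

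I would mirror the proof of Lemma~\ref{s6:lem:list_ab}, transferring the problem from sections of the curve $X$ to cohomology classes of its (generalized) Jacobian. Suppose for contradiction that $|S| \ge n+1$, and pick $n+1$ pairwise nonconjugate sections $s_0, s_1, \ldots, s_n \in S$; note that each $s_i$ is almost locally geometric since it is geometric at every $v \in \Omega$.

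First, after a finite extension of $K$ providing a $K$-rational base point on $X$, I would take $A$ to be the generalized Jacobian of $\overline{X}$ with respect to the divisor of cusps $X^{csp}$, as in the remark following Corollary~\ref{s5:cor:noncusp_selmer}; this is a semiabelian variety over $K$, and the associated Albanese map $X \hookrightarrow A$ induces an injection on $K_v$-rational points for every $v$. The abelianizations of the $s_i$ then yield cohomology classes
\[
c_i = s_i^{ab} - s_0^{ab} \in H^1(G_K, T(A))
\]
for $i = 0, \ldots, n$ (so $c_0 = 0$). For every $v \in \Omega$ the section $s_i$ is geometric, hence $c_i$ is geometric at $v$ with local value $c_{i,v} = s_{i,v} - s_{0,v}$ lying in the set $C_v = \{p - s_{0,v} : p \in S_v\} \subset A(K_v)$ of cardinality at most $n$. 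Lemma~\ref{s6:lem:list_ab} then gives $|\{c_0, c_1, \ldots, c_n\}| \le n$, so $c_i = c_j$ for some $i \ne j$.

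The equality $c_i = c_j$ forces $s_{i,v} = s_{j,v}$ in $A(K_v)$ for every $v \in \Omega$, hence also in $X(K_v)$ by injectivity of the Albanese map. Since the Kummer map is injective for $\pi_1$-nontrivial curves over $p$-adic local fields, the local sections $s_{i,v}$ and $s_{j,v}$ are conjugate for all $v \in \Omega$; applying \cite[Thm 4.6]{porowski2024} on the density-one set $\Omega$ will yield that $s_i$ and $s_j$ are globally conjugate, contradicting their distinctness in $S$. The main technical point will be verifying that the Albanese map into the generalized Jacobian is a closed immersion with the expected effect on abelianized sections of $\Pi_X$, after which the argument reduces to a direct application of Lemma~\ref{s6:lem:list_ab} in the spirit of its own proof.
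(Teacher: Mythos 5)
Your argument is correct in substance but follows a genuinely different route from the paper. The paper first disposes of the case where $\Delta_X$ is abelian by translating directly into Lemma~\ref{s6:lem:list_ab}, and in the hyperbolic case passes to a finite \'etale cover of positive genus (keeping track of cardinalities: a degree-$d$ cover turns the local bound $n$ into $dn$ while the $n+1$ candidate sections produce $d(n+1)$ sections upstairs, so the contradiction survives) and then invokes Lemma~\ref{s3:lem:compact_ab} to reduce to the Jacobian of the compactification, i.e.\ to Lemma~\ref{s6:lem:list_ab} for an abelian variety. You instead work with the generalized Jacobian $A$ of $\overline{X}$ relative to the cusps and apply Lemma~\ref{s6:lem:list_ab} directly to the semiabelian $A$; this avoids the covering trick and the cardinality bookkeeping entirely, at the price of (a) verifying that the Albanese map $X\to A$ is injective on $K_v$-points for every $\pi_1$-nontrivial curve (true, but the genus-zero case with $r\ge 2$ cusps needs its own one-line check, since there the usual Jacobian is trivial), and (b) re-proving the content of Lemma~\ref{s3:lem:compact_ab} in the generalized-Jacobian setting rather than citing it. Note that Lemma~\ref{s6:lem:list_ab} is indeed stated for semiabelian varieties and concerns actual local values rather than reductions, so the pathology of Remark~\ref{s3:rem} does not interfere with your use of the toric part. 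Two small points to tighten: first, when $\Delta_X$ is abelian ($X=\mathbb{G}_m$ or an elliptic curve) you should not appeal to \cite[Thm 4.6]{porowski2024} at the end --- but there $\Pi_X=\Pi^{(ab)}_X$, so $c_i=c_j$ already gives conjugacy of $s_i$ and $s_j$ with no further input; second, the conjugacy of the local sections $s_{i,v}$ and $s_{j,v}$ follows from their being geometric with the same support (the image of a point under the Kummer map is a single conjugacy class), rather than from injectivity of the Kummer map, which plays its role earlier when you identify the supports.
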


\begin{proof}
Observe that when $\Delta_X$ is abelian then the statement follows directly from Lemma~\ref{s6:lem:list_ab} by translating between sections and cohomology classes. Thus we may assume that $X$ is a hyperbolic curve.
Moreover, we are also free to replace $K$ by a finite field extension.

Suppose that there are $n+1$ pairwise nonconjugate sections
\[
s_0,\ldots s_n
\]
belonging to the set $S$.
Consider a finite \'etale cover $f\colon X'\to X$ of degree $d\ge 1$, by extending the base field $K$ we may assume that for every $0\le i\le n$ all quasi-sections in the fibre $f^{-1}(s_i)$ are sections.
For $v\in \Omega$ denote 
\[
S'_v = f^{-1}(S_v)\cap X'(K_v),
\]
it is a finite set of cardinality $\le dn$.
Let 
\[
S'\subset \Sec(\Pi_{X'},G_K)
\]
be the set of all sections $s'$ such that for every $v\in \Omega$ the section $s'$ is geometric at $v$ and moreover $s'_v\in S'_v$.
Then we have an inclusion 
\[
\bigcup _{0\le i \le n} f^{-1}(s_i) \subset S',
\]
with the set on the left hand side having cardinality $d(n+1)$.
Therefore we see that to prove the statement for $X$ we may pass to a finite \'etale cover of $X$; in particular we assume that $X$ has positive genus.    

Clearly, any section contained in $S$ is an almost locally geometric section.
Thus we see that after applying Lemma~\ref{s3:lem:compact_ab} we are reduced to a statement about cohomology classes which was proved in Lemma~\ref{s6:lem:list_ab}.
\end{proof}
\begin{remark}
For a hyperbolic curve $X$ over a number field $K$ write
\[
\Sel(X,K)\subset \Sec(\Pi_X, G_K) 
\]
for the subset of Selmer sections.
Suppose that $X$ is proper and $K$ does not contain any CM subfield.
Then it is shown in~\cite{betts_stix_2025} that for every valuation $v\in \V(K)$ the restriction map
\[
\Sel(X,K) \to X(K_v)
\]
has finite image.
The previous lemma says in particular that a global bound (i.e., independent on $v$) on the cardinality of this image for all valuations $v$ from a set $\Omega$ of density one would imply the finiteness of the set $\Sel(X,K)$, with the same bound on its size.
\end{remark}

\begin{corollary}\label{s6:cor:almost_locally_geom}
Let $f\colon X \to Y$ be a dominant morphism of $\pi_1$-nontrivial curves over $K$, of degree $d\ge 1$. Let $t\in \qSec(Y)$ be an almost locally geometric quasi-section. Consider the set $S_0$ of all almost locally geometric quasi-sections $s\in \qSec(X)$ such that $f(s) = t$. Then $S_0$ is finite, of cardinality $\le d$.  
\end{corollary}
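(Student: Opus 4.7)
The approach is to argue by contradiction and reduce to Lemma~\ref{s6:lem:list}. Suppose that $S_0$ contains at least $d+1$ pairwise distinct quasi-sections $s_0,\ldots, s_d$. By the definition of $\qSec(X)$ as a colimit over finite extensions, there exists a finite extension $L/K$ over which $t$ and each $s_i$ come from an actual section of the relevant homotopy exact sequence, and, since the restriction maps between conjugacy classes of sections along $L/K$ are injective, the $s_i$ remain pairwise non-conjugate. Replacing $K$ by $L$, I therefore work inside $\Sec(\Pi_X,G_K)$ and aim to exhibit $d+1$ pairwise non-conjugate elements in a set whose cardinality Lemma~\ref{s6:lem:list} bounds by $d$, yielding the desired contradiction.

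Since almost local geometricity is preserved by intersecting finitely many density-one sets, I can pick a single $\Omega\subset \V(K)$ of density one on which $t$ and all the $s_i$ are simultaneously geometric. For each $v\in\Omega$, write $y_v\in Y(K_v)$ for the support of the local section $t_v$ and set
\[
S_v \;=\; f^{-1}(y_v)\cap X(K_v).
\]
Because $f\colon X\to Y$ is a dominant morphism of smooth curves of degree $d$, the geometric fibre $f^{-1}(y_v)$ contains at most $d$ points, so $|S_v|\le d$. Compatibility of the Kummer map with the morphism $f$, combined with $f(s_i)=t$, forces $f(s_{i,v}) = t_v = y_v$ for every $v\in\Omega$ and every $0\le i\le d$, hence $s_{i,v}\in S_v$.

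The last step is to apply Lemma~\ref{s6:lem:list} to the family $(S_v)_{v\in\Omega}$ with $n=d$. The lemma yields that the set $S\subset \Sec(\Pi_X,G_K)$ of all sections which are geometric on $\Omega$ with $s_v\in S_v$ has cardinality at most $d$. But $\{s_0,\ldots,s_d\}\subset S$ consists of $d+1$ pairwise non-conjugate members, a contradiction. I do not anticipate a genuine obstacle: the argument is a mechanical consequence of Lemma~\ref{s6:lem:list} together with the elementary fact that a degree-$d$ dominant morphism of smooth curves has geometric fibres of size at most $d$. The only bookkeeping that requires care is the passage from the colimit $\qSec(X)$ to a concrete set of non-conjugate sections over a common finite base field, and the assembly of the individual density-one sets into a single density-one $\Omega$.
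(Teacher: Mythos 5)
Your proposal is correct and follows essentially the same route as the paper: argue by contradiction with $d+1$ quasi-sections, pass to a common finite extension where they become pairwise non-conjugate sections, take $S_v=f^{-1}(t_v)\cap X(K_v)$ on a common density-one set $\Omega$, and apply Lemma~\ref{s6:lem:list} with $n=d$. The paper's proof is identical in substance, merely less explicit about the colimit bookkeeping and the Kummer-map compatibility that puts $s_{i,v}$ in $S_v$.
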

\begin{proof}
Suppose that there exists $n+1$ distinct quasi-sections
\[
s_0,\ldots, s_n
\]
in $S_0$, by passing to a finite field extension we may assume that $t$ and $(s_i)_i$ are in fact sections.
Let $\Omega\subset \V(K)$ be a set of valuations of density one such that sections $t$ and $(s_i)_i$ are geometric on $\Omega$.
For $v\in \Omega$ we define 
\[
S_v = f^{-1}(t_v)\cap X(K_v),
\]
it is a finite subset of $X(K_v)$ of cardinality $\le d$. 
Let
\[
S\subset \Sec(\Pi_X, G_K)
\]
be the set of all sections $s$ such that for every $v\in \Omega$ the section $s$ is geometric at $v$ and $s_v\in S_v$.
Then it is clear that $s_i\in S$ but by Lemma~\ref{s6:lem:list} the set $S$ has cardinality $\le n$, which is a contradiction.
\end{proof}

We now start describing fibres over Selmer sections along a dominant morphism of hyperbolic curves.

\begin{lemma}\label{s6:lem:preimage_of_selmer}
Let $f \colon X\to Y$ be a dominant morphism of hyperbolic curves over $K$. Let $t\in \qSec(Y)$ be a Selmer quasi-section. Suppose that the fibre $f^{-1}(t)$ is non-empty and let $s\in f^{-1}(t)$. Then the following hold:
\begin{enumerate}[(i)]
\item the quasi-section $s$ is Selmer,
\item if $t$ is cuspidal then $s$ is also cuspidal.
\end{enumerate}
\end{lemma}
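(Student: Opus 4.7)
The plan is to work place by place: for part (i), I show that the local restriction $s_v$ lifting a Selmer local section $t_v$ is itself geometric or cuspidal; for part (ii), I combine (i) with a disjointness argument and Theorem~\ref{s2:thm:loc_cusp}.

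After a finite base extension we may assume $s$ and $t$ are genuine sections. Extend $f$ to a finite morphism $\overline{f}\colon \overline{X}\to \overline{Y}$ of the smooth compactifications and pass to the composites $s^{cp}, t^{cp}$ along the surjections $\Pi_X\twoheadrightarrow \Pi_{\overline{X}}$ and $\Pi_Y\twoheadrightarrow \Pi_{\overline{Y}}$. Fix $v\in \V(K)$: since $t$ is Selmer, the local section $t_v^{cp}$ is geometric at some $y\in \overline{Y}(K_v)$—lying in $Y(K_v)$ in the geometric case, and at a cusp of $Y$ in the cuspidal case—and $\overline{f}(s_v^{cp})=t_v^{cp}$. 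The core local step is to show that $s_v^{cp}$ is itself geometric at some $K_v$-rational preimage $x\in \overline{f}^{-1}(y)$. The image of $s_v^{cp}$ lies in the preimage $\overline{f}^{-1}(D_y)\subset \Pi_{\overline{X}_v}$ of the decomposition group of $y$; analysing the finite map $\overline{f}$ locally around $y$ via the completed local rings at $y$ and at its preimages, one sees that this preimage decomposes, up to $\Delta_{\overline{X}_v}$-conjugation, as the union of the decomposition groups $D_{x_i}$ of the $K_v$-rational preimages $x_i\in \overline{f}^{-1}(y)$. Since $s_v^{cp}(G_v)$ is carried isomorphically onto $D_y$ by $\overline{f}$, it must coincide with some $D_{x_i}$, so $s_v^{cp}$ is geometric at $x_i$.

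Descending back to $s_v$: if $x_i\in X$ then $s_v$ is already geometric at $x_i$; if $x_i\in X^{csp}$ is a cusp, then the lifts of the geometric section at $x_i$ through $\Pi_{X_v}\twoheadrightarrow \Pi_{\overline{X}_v}$ are precisely the cuspidal sections at $x_i$ (they are exactly the sections of $D_{x_i}\subset \Pi_{X_v}$ mapping onto $G_v$), so $s_v$ is cuspidal there. This proves (i). For (ii), assume $t$ is globally cuspidal with support at a cusp $y\in Y^{csp}$, so that $t_v$ is cuspidal at $y$ for every $v$. By (i), $s_v$ is geometric or cuspidal; but if $s_v$ were geometric at some $x\in X(K_v)$, then $f(x)\in Y(K_v)$ would support $t_v$, contradicting that $t_v$ is cuspidal at $y\notin Y(K_v)$. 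Therefore $s_v$ is cuspidal for every $v$, and Theorem~\ref{s2:thm:loc_cusp} yields that $s$ is globally cuspidal.

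The main obstacle will be the decomposition of $\overline{f}^{-1}(D_y)$ into conjugates of the $D_{x_i}$, which amounts to the statement that any lift along $\overline{f}$ of a geometric local section of a proper curve is itself geometric. In characteristic zero the ramification of $\overline{f}$ is tame, and the required description should follow from the classical behaviour of decomposition subgroups under finite morphisms of normal schemes, applied to the formal neighbourhood of $y$ in $\overline{Y}$ and the corresponding disjoint union of formal neighbourhoods of the $x_i$ in $\overline{X}$.
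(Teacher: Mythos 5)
Your part (ii) essentially matches the paper's argument (local disjointness of geometric and cuspidal sections, then Theorem~\ref{s2:thm:loc_cusp}), but your proof of part (i) has a fatal gap. The paper does not prove (i) at all: it cites \cite[Thm.~F]{mochizuki_tsujimura_2023}, a deep result resting on resolution of nonsingularities for $p$-adic hyperbolic curves. Your attempt to replace that citation by a ``classical'' decomposition-group argument does not work. The finite morphism $\overline{f}\colon\overline{X}\to\overline{Y}$ is in general ramified, so the induced homomorphism $\Pi_{\overline{X}_v}\to\Pi_{\overline{Y}_v}$ has open image but a large kernel $N$ (it can be injective only when $\overline{f}$ is \'etale, as a rank count on the geometric fundamental groups via Riemann--Hurwitz shows). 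Consequently the preimage of $D_y=t_v^{cp}(G_v)$ is an extension of (an open subgroup of) $G_v$ by $N$, and is nothing like a finite union of $\Delta_{\overline{X}_v}$-conjugates of decomposition groups $D_{x_i}$ of rational preimages. A section $s_v^{cp}$ with $\overline{f}\circ s_v^{cp}=t_v^{cp}$ is exactly an arbitrary splitting of this extension over $G_v$, and no formal-neighbourhood analysis of the $x_i$ forces its image into some $D_{x_i}$. The assertion ``every lift along $\overline{f}$ of a geometric local section is geometric'' is precisely the hard content of the Mochizuki--Tsujimura theorem, not a consequence of the classical theory of decomposition groups for finite morphisms of normal schemes, which describes genuine decomposition subgroups inside Galois groups of covers rather than group-theoretic lifts of sections.

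A second, independent error occurs in your descent from $\overline{X}$ to $X$: you claim that the sections of $\Pi_{X_v}\to G_v$ lifting the geometric section of $\Pi_{\overline{X}_v}$ at a cusp $x_i$ are precisely the cuspidal sections at $x_i$. This fails for the same structural reason: the surjection $\Pi_{X_v}\twoheadrightarrow\Pi_{\overline{X}_v}$ has a large kernel (normally generated by all cuspidal inertia subgroups), so most lifts do not have image in $D_{x_i}$ and are neither cuspidal nor geometric. Compare the proof of Theorem~\ref{s2:thm:loc_cusp}, which needs the cuspidally central quotient $\Delta^{cc}_X$ and a local-global argument precisely because this implication is not automatic even when adding back a single cusp. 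In short: for part (i) you should invoke \cite[Thm.~F]{mochizuki_tsujimura_2023} as the paper does, or be prepared to reprove resolution of nonsingularities; the rest of your argument can then be discarded in favour of the two-line deduction of (ii).
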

\begin{proof}
The fact that $s$ is a Selmer section is proved in~\cite[Thm.~F]{mochizuki_tsujimura_2023}. Suppose that $t$ is cuspidal, as $s$ is a Selmer section we see that for every valuation $v\in \V(K)$ the local section $s_v$ is cuspidal.
Thus by Theorem~\ref{s2:thm:loc_cusp} the section $s$ is cuspidal.
\end{proof}

The previous lemma describes the fibre over a cuspidal section, in the next proposition we discuss the case of a noncuspidal Selmer section.

\begin{proposition}\label{s6:prop:fibre}
Let $f\colon X \to Y$ be a dominant morphism of hyperbolic curves over $K$, of degree $d\ge 1$.
Let $t\in \qSec(Y)$ be a noncuspidal Selmer quasi-section.
Then exactly one of the following holds:
\begin{enumerate}[(i)]
\item either there exists a cuspidal quasi-section in the fibre $f^{-1}(t)$ (thus $t$ is geometric),
\item or the fibre $f^{-1}(t)$ is finite, of cardinality $\le d$.
\end{enumerate} 
\end{proposition}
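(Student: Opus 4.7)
The plan is to split the analysis according to whether or not $f^{-1}(t)$ contains a cuspidal quasi-section. Mutual exclusivity of the two alternatives will fall out of the first branch, since even a single cuspidal section in the fibre spawns an infinite torsor of twists with the same image, forcing $|f^{-1}(t)| = \infty$. The bound $|f^{-1}(t)| \le d$ in the second branch will follow by reducing, via Corollary~\ref{s5:cor:noncusp_selmer}, to Corollary~\ref{s6:cor:almost_locally_geom}.

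\emph{First branch.} Suppose $s \in f^{-1}(t)$ is cuspidal and, after enlarging $K$, that $s$ is a genuine cuspidal section with support at a $K$-rational cusp $x \in X^{csp}(K)$. The compactified map $\bar f \colon \overline X \to \overline Y$ sends $x$ either to a cusp of $Y$ or to a point $y \in Y(K)$; in the former case $f$ would carry a decomposition group $D_x \subset \Pi_X$ into a decomposition group at a cusp of $Y$, forcing $t = f(s)$ to be cuspidal, contrary to hypothesis. Hence $y \in Y(K)$, and the \'etale-local form $v = u^{e_x}$ of $\bar f$ at $x$ shows that a generator of the inertia $I_x$ (a small loop around $x$ in $X$) is sent to $e_x$ times a loop around the smooth point $y \in Y$, which is null-homotopic in $Y$. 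Thus the image of $I_x$ in $\Pi_Y$ is trivial, the decomposition group $D_x$ maps through its quotient $D_x/I_x \cong G_K$ into a decomposition group at $y$, and $t$ is the geometric section with support at $y$. Furthermore, for any class $c \in H^1(G_K, I_x) \cong \widehat{K^\times}$ the twisted cuspidal section $s \cdot c$ again satisfies $f(s \cdot c) = t$, since $I_x$ maps trivially into $\Pi_Y$. As $\widehat{K^\times}$ is infinite (already $K^\times/(K^\times)^2$ is infinite for any number field) and distinct twists yield pairwise nonconjugate sections over $K$ that remain pairwise distinct after any finite base change by the injectivity of restriction on conjugacy classes of sections recalled in the excerpt, the fibre $f^{-1}(t)$ is infinite, ruling out case (ii).

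\emph{Second branch.} Suppose no element of $f^{-1}(t)$ is cuspidal, and assume for contradiction that $|f^{-1}(t)| \ge d+1$; choose pairwise distinct quasi-sections $s_0, \ldots, s_d \in f^{-1}(t)$. By Lemma~\ref{s6:lem:preimage_of_selmer} each $s_i$ is Selmer, and by hypothesis none is cuspidal. Applying Corollary~\ref{s5:cor:noncusp_selmer} to $t$ and to each $s_i$ and passing to a common finite extension $L/K$, we may assume $t$ and every $s_i$ are represented by almost locally geometric sections over $L$; the $s_i$ remain pairwise distinct by injectivity of the restriction maps on conjugacy classes of sections. Corollary~\ref{s6:cor:almost_locally_geom}, applied to the base-changed morphism $f_L \colon X_L \to Y_L$ of degree $d$ and the almost locally geometric section $t$, bounds the number of almost locally geometric quasi-sections of $X_L$ lying over $t$ by $d$, contradicting the presence of $d+1$ such $s_i$. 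The main technical point is the local analysis in the first branch showing $I_x$ has trivial image in $\Pi_Y$ when $\bar f(x) \in Y$; everything else is a direct assembly of Lemma~\ref{s6:lem:preimage_of_selmer} with Corollaries~\ref{s5:cor:noncusp_selmer} and~\ref{s6:cor:almost_locally_geom}.
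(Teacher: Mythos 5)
Your proof is correct and follows essentially the same route as the paper: the second branch (Lemma~\ref{s6:lem:preimage_of_selmer}, then Corollary~\ref{s5:cor:noncusp_selmer} to reduce to almost locally geometric sections, then Corollary~\ref{s6:cor:almost_locally_geom}) is exactly the paper's argument. Your first branch merely spells out in detail the mutual exclusivity of (i) and (ii) --- via the triviality of the image of $I_x$ in $\Pi_Y$ and the infinitude of the cuspidal packet --- which the paper dismisses as clear.
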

\begin{proof}
It is clear that both assertions cannot hold simultaneously, thus it is enough to show that the fibre $f^{-1}(s)$ contains at most $n$ noncuspidal quasi-sections.
Suppose that there exists $n+1$ distinct noncuspidal quasi-sections
\[
s_0,\ldots, s_n
\]
in the fibre $f^{-1}(t)$, by passing to a finite field extension we may assume that $(s_i)_i$ are in fact sections.
By applying Lemma~\ref{s6:lem:preimage_of_selmer} we see that sections $(s_i)_i$ are Selmer.
Thanks to Corollary~\ref{s5:cor:noncusp_selmer} we may extend the base field $K$ further and assume that sections $t$ and $(s_i)_i$ are almost locally geometric.
However, this contradicts the statement of Corollary~\ref{s6:cor:almost_locally_geom}.
\end{proof}

In certain situations we can show that the cardinality of the fibre (if nonempty) is the same as the degree of the morphism. The next proposition gives an example of such a statement, for simplicity we consider only the case of proper curves. 

\begin{proposition}
Let $f\colon X\to Y$ be a dominant morphism of proper hyperbolic curves, of degree $d\ge 1$, which is geometrically a Galois cover.
Let $t\in\qSec(Y)$ be a locally geometric quasi-section and suppose that the fibre $f^{-1}(t)$ is nonempty.
Then exactly one of the following holds:
\begin{enumerate}[(i)]
\item either $t$ is geometric, supported at a branch point of $f$,
\item or the fibre $f^{-1}(t)$ has cardinality $d$.
\end{enumerate}
\end{proposition}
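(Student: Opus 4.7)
The plan is to reduce to the case where $f$ is Galois over $K$, identify the fibre $f^{-1}(t)$ as an orbit under the Galois group, and then rule out nontrivial stabilizers using Lemma~\ref{s6:lem:list}. After a finite extension of $K$ I may assume that $f\colon X\to Y$ is Galois over $K$ with group $G$ of order $d$ (so $\Pi_X\triangleleft\Pi_Y$ with $\Pi_Y/\Pi_X\cong G$), that $t$ is represented by a section of $\Pi_Y\to G_K$, that every element of $f^{-1}(t)$ is represented by a section of $\Pi_X\to G_K$, and that for every nontrivial $g\in G$ the fixed-point set $F_g\subset X$ of the $g$-action on $X$ is contained in $X(K)$.

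Since $\Pi_X$ is normal in $\Pi_Y$, the group $G$ acts on $\Sec(\Pi_X,G_K)$ by conjugation through any chosen lift $\tilde g\in\Pi_Y$. I claim $f^{-1}(t)=G\cdot s_0$ for any $s_0\in f^{-1}(t)$. Indeed, given another element $s_1\in f^{-1}(t)$, choose representatives with $f\circ s_1=y(f\circ s_0)y^{-1}$ for some $y\in\Pi_Y$; conjugating $s_0$ by $y$ (which preserves $\Pi_X$) and invoking injectivity of $\Pi_X\hookrightarrow\Pi_Y$ yields $s_1=y s_0 y^{-1}$, exhibiting $s_1$ as the image of $\bar y\in G$ acting on $s_0$. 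Consequently
\[
|f^{-1}(t)|=\frac{d}{|\operatorname{Stab}_G(s_0)|}.
\]

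It remains to show that $\operatorname{Stab}_G(s_0)$ is trivial unless $t$ is geometric at a branch point. An element $g\in\operatorname{Stab}_G(s_0)$ corresponds to a lift $\tilde g\in\Pi_Y$ centralizing $s_0(G_K)$. Suppose $g\ne 1$; its action on $X$ has finite fixed locus $F_g\subset X(K)$. By Lemma~\ref{s6:lem:preimage_of_selmer} the quasi-section $s_0$ is Selmer, and since $X$ is proper it is in fact locally geometric: for every $v\in\V(K)$, $s_{0,v}$ is geometric at some $x_v\in X(K_v)$. The translate $g\cdot s_{0,v}$ is geometric at $g(x_v)$, and its equality with $s_{0,v}$ forces $g(x_v)=x_v$, so $x_v\in F_g$ for every $v$. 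Now I apply Lemma~\ref{s6:lem:list} with $\Omega=\V(K)$ and $S_v=F_g$: the set of sections of $\Pi_X\to G_K$ whose local support lies in $F_g$ at every $v$ has cardinality at most $|F_g|$. But this set already contains the $|F_g|$ geometric sections at the points of $F_g\subset X(K)$, so $s_0$ must coincide with one of them, say the geometric section at $x\in F_g$. Then $t=f(s_0)$ is geometric at $f(x)$, and $x$, being fixed by the nontrivial element $g\in G$, is a ramification point of $f$, so $f(x)$ is a branch point.

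Thus in case (ii) one has $\operatorname{Stab}_G(s_0)=1$ and $|f^{-1}(t)|=d$; the two cases are mutually exclusive, since at a branch point $y=f(x)$ with inertia of order $e>1$ the orbit computation gives $|f^{-1}(t)|=d/e<d$. I expect the delicate point to be the stabilizer analysis: one must verify that the $G$-orbit structure survives the passage to conjugacy classes, and that the reduction to Lemma~\ref{s6:lem:list} legitimately forces a purely local fixed-point pattern to originate from a globally geometric section.
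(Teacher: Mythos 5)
Your argument is built on the premise that $f_*\colon \Pi_X\to\Pi_Y$ is injective with image a normal subgroup of index $d$, so that $G=\Pi_Y/\Pi_X$ acts by conjugation and the fibre $f^{-1}(t)$ is a single $G$-orbit of size $d/|\operatorname{Stab}_G(s_0)|$. This is only true when $f$ is \emph{\'etale}. The proposition is precisely about possibly ramified Galois covers (case (i) concerns branch points), and for a ramified cover of proper curves $f_*$ is in general neither injective nor of image of index $d$: for a hyperelliptic $X\to\mathbb{P}^1$ one has $\Pi_{\mathbb{P}^1_{\overline{K}}}=1$ while $d=2$. Consequently the transitivity claim, the orbit--stabilizer formula, and in particular the upper bound $|f^{-1}(t)|\le d$ are not established by your argument. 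The correct $G$-action on $f^{-1}(t)$ is the one induced functorially by the deck transformations of $X$ over $Y$ (which act on $\Pi_X$ compatibly with $f_*$), not by conjugation by lifts in $\Pi_Y$; and the upper bound has to come from elsewhere --- the paper imports it from Proposition~\ref{s6:prop:fibre} (ultimately Corollary~\ref{s6:cor:almost_locally_geom} and Lemma~\ref{s6:lem:list}), noting via Lemma~\ref{s6:lem:preimage_of_selmer} that every element of the fibre is locally geometric.

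The salvageable and genuinely nice part of your proposal is the stabilizer analysis. Reinterpreted with the geometric $G$-action, it shows: if some $g\ne 1$ fixes the class of $s_0$, then every local support $x_v$ lies in the finite fixed locus $F_g\subseteq R$, and your counting argument via Lemma~\ref{s6:lem:list} (the set of sections supported in $F_g$ has at most $|F_g|$ elements, all accounted for by geometric sections at points of $F_g$) forces $s_0$ to be geometric at a ramification point, hence case (i). This is a legitimate substitute for the paper's direct appeal to Stoll's theorem, and its contrapositive gives a free orbit, hence $|f^{-1}(t)|\ge d$ in case (ii) --- the paper gets the same lower bound more cheaply by exhibiting a single place $v$ with $s_v\notin R(K_v)$ and using freeness of the $G$-action on $f^{-1}(t_v)$ there. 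But without the upper bound from Proposition~\ref{s6:prop:fibre} your proof does not close, so as written there is a genuine gap.
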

\begin{proof}
After passing to a finite field extension of $K	$ we may assume that $Y\to X$ is a Galois cover with Galois group $G$, of size $d$.
Let $s$ be a quasi-section of $X$ mapping to $t$, by Lemma~\ref{s6:lem:preimage_of_selmer} we see that $s$ is a locally geometric section.
Extend the base field $K$ so that $s$ becomes a section.
Denote by $R\subset X$ the ramification divisor of $f$, it is a finite $K$-scheme.
Suppose first that for every $v\in \V(K)$ we have that $s_v\in R(K_v)$.
Then it follows from~\cite[Thm 3.11.]{stoll2007finite} that $s$ is a geometric section, supported at a ramification point of $f$.
Thus $t$ is geometric as well, with support at a branch point of $f$.

Otherwise, there exists a valuation $v\in \V(K)$ such that $s_v$ is not a ramification point of $f$.
Note that $G$ acts on the fibres $f^{-1}(t)$ and $f^{-1}(t_v)$ and the $G$-action is compatible with the restriction map
\[
f^{-1}(t)\to f^{-1}(t_v).
\]
Since $s_v$ is not a ramification point the orbit of $s_v$ has size $d$.
Thus we see that the orbit of $s$ also has size $d$, in particular the fibre $f^{-1}(t)$ has cardinality $\ge d$.
The reverse inequality follows from Proposition~\ref{s6:prop:fibre}. 
\end{proof}

\bibliographystyle{abbrv}
\bibliography{bibliography}

\end{document}